\numberwithin{equation}{section}
\newtheorem{theorem}{Theorem}[section]
\newtheorem{remark}[theorem]{Remark}
\newtheorem{lemma}[theorem]{Lemma}
\newtheorem{definition}{Definition}[section]
\journal{$\ast\ast\ast$}
\begin{document}

\begin{frontmatter}

\title{Convergence of ground state solutions for nonlinear Schr\"{o}dinger equations on graphs}

\author[label1]{Ning Zhang}
 \ead[label1]{nzhang@amss.ac.cn}
\address[label1]{China Institute for Actuarial Science,
 Central University of Finance and Economics, Beijing 100081, P. R. China}
\author[label2,label3]{Liang Zhao}
 \ead[label2]{ liangzhao@bnu.edu.cn}
\address[label2]{Corresponding author, School of Mathematical Sciences,
 Beijing Normal
University, Beijing 100875, P. R. China}
\address[label3]{Laboratory of Mathematics and
Complex Systems, Ministry of Education,
Beijing 100875, P. R. China}

\begin{abstract}
We consider the nonlinear Schr\"{o}dinger equation $-\Delta u+(\lambda a(x)+1)u=|u|^{p-1}u$ on a locally finite graph $G=(V,E)$. We prove via the Nehari method that if $a(x)$ satisfies certain assumptions, for any $\lambda>1$, the equation admits a ground state solution $u_\lambda$. Moreover, as $\lambda\rightarrow \infty$, the solution $u_\lambda$ converges to a solution of the Dirichlet problem $-\Delta u+u=|u|^{p-1}u$ which is defined on the potential well $\Omega$. We also provide a numerical experiment which solves the equation on a finite graph to illustrate our results.
\end{abstract}

\begin{keyword}
Sch\"{o}dinger equation\sep locally finite graph\sep ground state\sep potential well
\MSC 35A15 \sep 35Q55 \sep 58E30

\end{keyword}

\end{frontmatter}

\section{Introduction and main results}

Analysis on graphs is an important mathematical topic and has has many applications in other fields, such as imagine processing, data analysis, neural network, etc. \cite{chakik,curtis,desquesnes,elmoataz1,elmoataz2,elmoataz3,
elmoataz4}. Recently, many researchers have pay attention to the differential equations on graphs. There are works related to some important geometric inequalities on graphs \cite{bauerhorn,horn}. Many aspects about heat equation such as existence and non-existence of global solutions \cite{chunglee,linwu2,liu}, uniqueness and blow-up properties \cite{huang,xin}, estimates for heat kernel \cite{horn,linwu1,wojciechowski} have also been considered. For the elliptic case, Grigoryan et al \cite{grigor1,grigor2,grigor3} established existence results on graphs for some nonlinear elliptic equations based on the variational framework and this inspires us to deal with the Sch\"{o}dinger type equations on graphs.

The nonlinear Sch\"{o}dinger type equation of the form
\begin{equation*}
-\Delta u+b(x)u=f(x,u),\ \ u\in W^{1,2}(\Omega),
\end{equation*}
where $\Omega\subseteq {\mathbb{R}^n}$, $n\geq 2$, $f(x,s):\Omega\times \mathbb{R}\rightarrow \mathbb{R}$ is a nonlinear continuous function and $b(x)\in C(\Omega, \mathbb{R})$ is a given potential, has been extensively studied during the past several decades. It has attracted great interest because not only its importance in applications but also it provides a good model for developing new mathematical methods. Many papers are devoted to this kind of equations and among them, the readers can refer to \cite{brezisnirenberg,
cao,liwangzeng,rabinowitz,yang2,
yangzhao,zhaochang,zhaozhang} and the references therein.
For the Euclidean case, an important property about the Sch\"{o}dinger type equation is convergence of ground state solutions \cite{alves,bartschwang,clappding,dingtanaka,hezou,
wangzhou}. We may expect that this still holds for this kind of equations on graphs and explore the relationships between the structures of a graph and the partial differential equation.

Now let us explain our problems in details. Suppose $G=(V,E)$ be a graph which is locally finite and connected, where $V$ denotes the vertex set and $E$ denotes the edge set. Here we call $G$ a locally finite graph if for any $x \in V$, there are only finite $y\in V$ such that $xy\in E$. A graph is connected if any two vertexex $x$ and $y$ can be connected via finite edges. We use $w_{xy}>0$ to denote the weight of an edge $xy\in E$. If $w_{xy}=w_{yx}$ for any $xy\in E$, we call it a symmetric weight on $G$. The measure $\mu: V\rightarrow {\mathbb{R}^+}$ on the graph is a finite positive function on $G$. We call it a uniformly positive measure if there exists a constant $\mu_{\min}>0$ such that $\mu(x)\geq \mu_{\min}$ for all $x\in V$. Consider a domain $\Omega\subset V$. The distance $d(x,y)$ of two vertex $x, y\in \Omega$ is defined by the minimal number of edges which connect these two vertexes. If the distance $d(x,y)$ is uniformly bounded from above for any $x, y\in \Omega$, we call $\Omega$ a bounded domain in $V$. The boundary of $\Omega$ is defined as
\begin{equation}\label{boundary}
\partial \Omega:=\{y\not\in \Omega: \exists x\in \Omega \text{ such that } xy\in E\}
\end{equation}
and the interior of $\Omega$ is denoted by
$\Omega^\circ=\Omega\setminus \partial \Omega$. Obviously, we have that $\Omega^\circ=\Omega$ which is different with the Euclidean case.

To study partial differential equations on graphs, we define the $\mu$-Laplacian of a function $u: V\rightarrow \mathbb{R}$ as
\begin{equation}\label{laplacian}
\Delta u(x):=\frac{1}{\mu(x)}\sum_{y\sim x}w_{xy}(u(y)-u(x)),
\end{equation}
where $y\sim x$ stands for any vertex $y$ connected with $x$ by an edge $xy\in E$. We will call it Laplacian for brevity throughout this paper. The gradient form of two functions $u$ and $v$ on the graph is defined by
\begin{equation}\label{gradient}
\Gamma(u,v)(x):=\frac{1}{2\mu(x)}\sum_{y\sim x}w_{xy}(u(y)-u(x))(v(y)-v(x)).
\end{equation}
In particular, we use $\Gamma(u)$ to denote $\Gamma(u,u)$ and the length of the gradient for $u$ is
\begin{equation}\label{gradientlength}
|\nabla u|(x):=\sqrt{\Gamma(u)(x)}
=\left(\frac{1}{2\mu(x)}\sum_{y\sim x}w_{xy}(u(y)-u(x))^2\right)^{1/2}.
\end{equation}

The equation we are interested in is
\begin{equation}\label{equation}
-\Delta u+(\lambda a(x)+1)u=|u|^{p-1}u \ \ \text{in}\ \  V,
\end{equation}
where $2\leq p<\infty$. For the background and convergence of ground state solutions of (\ref{equation}) defined on Euclidean space, one can refer to \cite{bartschwang}. The potential $a(x)$ is a function defined on $V$ and we assume that $a(x)$ satisfies the following two conditions.\\

\noindent$(A_1)$ $a(x)\geq 0$ and the potential well $\Omega=\{x\in V: a(x)=0\}$ is a non-empty, connected and bounded domain in $V$.

\noindent$(A_2)$ There exists a vertex $x_0\in V$ such that
$a(x)\rightarrow +\infty$ as $d(x,x_0)\rightarrow +\infty$.\\

The integral of a function $u$ over $V$ is defined by
\begin{equation}\label{integral}
\int_V ud\mu=\sum_{x\in V}\mu(x)u(x).
\end{equation}
Let $C_c(V)$ be the set of all functions with compact support and $W^{1,2}(V)$ be the completion of $C_c(V)$ under the norm
\begin{equation}\label{norm}
\|u\|_{W^{1,2}(V)}=\left(\int_V(\nabla u^2+u^2) d\mu\right)^{1/2}.
\end{equation}
Clearly, $W^{1,2}(V)$ is a Hilbert space with the inner product
$$
<u,v>=\int_V (\Gamma(u,v)+uv)d\mu, \ \ \forall u,v\in W^{1,2}(V).
$$
To study the problem (\ref{equation}), it is natural to consider a function space
$$
E_\lambda=\left\{u\in W^{1,2}(V): \int_V \lambda a(x)u^2d\mu<+\infty\right\}
$$
with the norm
$$
\|u\|_{E_\lambda}=\left(\int_V (|\nabla u|^2+(\lambda a(x)+1)u^2)d\mu\right)^{1/2}.
$$
The space $E_\lambda$ is also a Hilbert space and its inner product is
$$
<u,v>_{E_\lambda}=\int_V (\Gamma(u,v)+(\lambda a(x)+1)uv)d\mu, \ \ \forall u,v\in E_\lambda.
$$
The functional related to (\ref{equation}) is
\begin{equation}\label{functional}
J_{\lambda}(u)=\frac{1}{2}\int_V (|\nabla u|^2+(\lambda a(x)+1)u^2)d\mu-\frac{1}{p+1}\int_V |u|^{p+1}d\mu.
\end{equation}
The Nehari manifold related to (\ref{equation}) is defined as
$$
\mathcal{N}_\lambda:=\left\{u\in E_\lambda\setminus \{0\}: J'_\lambda(u)\cdot u=0\right\}.
$$
Namely,
\begin{equation}\label{nehari}
\mathcal{N}_\lambda=\left\{u\in E_\lambda\setminus \{0\}: \int_V|\nabla u|^2+(\lambda a+1)u^2d\mu=\int_V |u|^{p+1} d\mu\right\}.
\end{equation}
Let $m_\lambda$ be
\begin{equation}\label{infnehari}
m_\lambda:=\inf_{u\in\mathcal{N}_\lambda}J_\lambda(u).
\end{equation}
If $m_\lambda$ can be achieved by some function $u_\lambda\in \mathcal{N}_\lambda$, $u_\lambda$ shall have the the least energy among all functions belong to the Nehari manifold and in fact, it is a critical point of the functional $J_\lambda$. We call $u_\lambda$ a ground state solution of (\ref{equation}).
Via the method first developed by Z. Nehari in \cite{nehari}, we prove that

\begin{theorem}\label{existence}
Let $G=(V,E)$ be a locally finite and connected graph with symmetric weight and uniformly positive measure. Assume $a(x): V\rightarrow [0,+\infty)$ is a function satisfying $(A_1)$ and $(A_2)$. Then for any positive constant $\lambda>1$, there exists a ground state solution $u_\lambda$ of the equation (\ref{equation}).
\end{theorem}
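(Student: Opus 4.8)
The plan is to realize the ground state as a minimizer of $J_\lambda$ over the Nehari manifold $\mathcal{N}_\lambda$ and then to upgrade this constrained minimizer into a free critical point of $J_\lambda$. Two elementary but essential features of the graph set-up should be recorded first. Since $\mu$ is uniformly positive, isolating a single vertex in the norm gives the pointwise bound $\mu_{\min}|u(x)|^2 \le \int_V u^2 d\mu \le \|u\|_{E_\lambda}^2$, i.e. $E_\lambda \hookrightarrow \ell^\infty(V)$ with $|u(x)| \le \mu_{\min}^{-1/2}\|u\|_{E_\lambda}$; combining this with $\int_V u^2 d\mu \le \|u\|_{E_\lambda}^2$ yields $\int_V |u|^q d\mu \le C\|u\|_{E_\lambda}^q$ for every $q\ge 2$, so $E_\lambda \hookrightarrow \ell^q(V)$ continuously and $J_\lambda$ is a well-defined $C^1$ functional. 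Next, for fixed $u\ne 0$ the fibering map $g(t):=J_\lambda(tu)=\frac{t^2}{2}\|u\|_{E_\lambda}^2-\frac{t^{p+1}}{p+1}\int_V|u|^{p+1}d\mu$ has, because $p+1>2$, a unique critical point $t_u>0$ which is a strict maximum, and $t_u u$ is the unique point where the ray $\mathbb{R}^+ u$ meets $\mathcal{N}_\lambda$. In particular $\mathcal{N}_\lambda\ne\emptyset$.

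On $\mathcal{N}_\lambda$ the constraint $\|u\|_{E_\lambda}^2=\int_V|u|^{p+1}d\mu$ reduces the functional to $J_\lambda(u)=(\frac12-\frac1{p+1})\|u\|_{E_\lambda}^2$, which is strictly positive since $p\ge 2$. Feeding the embedding bound into the constraint gives $\|u\|_{E_\lambda}^2\le C\|u\|_{E_\lambda}^{p+1}$, hence $\|u\|_{E_\lambda}\ge c>0$ on $\mathcal{N}_\lambda$ and $m_\lambda\ge(\frac12-\frac1{p+1})c^2>0$. If $\{u_n\}\subset\mathcal{N}_\lambda$ is a minimizing sequence, the same identity shows $\{u_n\}$ is bounded in $E_\lambda$, so along a subsequence $u_n\rightharpoonup u$ weakly in $E_\lambda$.

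The crux is compactness of the embedding $E_\lambda\hookrightarrow\ell^{p+1}(V)$, and this is exactly where hypothesis $(A_2)$ enters. Weak convergence forces $u_n(x)\to u(x)$ at every vertex, because point evaluation is a bounded functional on $E_\lambda$. For the tails set $a_R:=\inf_{d(x,x_0)>R}a(x)$; by $(A_2)$ we have $a_R\to\infty$, and since $\int_V\lambda a\,u_n^2\,d\mu\le\|u_n\|_{E_\lambda}^2\le C$ we obtain $\sum_{d(x,x_0)>R}\mu(x)u_n(x)^2\le C/(\lambda a_R)$ uniformly in $n$. The pointwise inequality $|u_n(x)|^{p+1}\le(\mu_{\min}^{-1/2}\|u_n\|_{E_\lambda})^{p-1}u_n(x)^2\le C' u_n(x)^2$ then controls the $\ell^{p+1}$ tails uniformly, while on the finite vertex set $\{d(x,x_0)\le R\}$ (finite since $G$ is locally finite and bounded sets are finite) pointwise convergence gives convergence of the finite sum. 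Letting $R\to\infty$ shows $u_n\to u$ strongly in $\ell^{p+1}(V)$, so $\int_V|u_n|^{p+1}d\mu\to\int_V|u|^{p+1}d\mu$. As $\|u_n\|_{E_\lambda}^2=\int_V|u_n|^{p+1}d\mu$ stays bounded away from $0$, the limit is nonzero and $u\ne 0$.

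It remains to show $u$ achieves $m_\lambda$ and solves the equation. Weak lower semicontinuity gives $\|u\|_{E_\lambda}^2\le\liminf\|u_n\|_{E_\lambda}^2=\int_V|u|^{p+1}d\mu$, so the projection factor satisfies $t_u\le 1$ with $t_u u\in\mathcal{N}_\lambda$; the chain $m_\lambda\le J_\lambda(t_u u)=(\frac12-\frac1{p+1})t_u^2\|u\|_{E_\lambda}^2\le(\frac12-\frac1{p+1})\liminf\|u_n\|_{E_\lambda}^2=m_\lambda$ forces every inequality to be an equality, so $t_u=1$, $u\in\mathcal{N}_\lambda$, and $J_\lambda(u)=m_\lambda$. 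Finally, $u$ is a free critical point: writing $G(u):=J_\lambda'(u)\cdot u=\|u\|_{E_\lambda}^2-\int_V|u|^{p+1}d\mu$, Lagrange multipliers give $J_\lambda'(u)=\sigma G'(u)$, and since on $\mathcal{N}_\lambda$ one computes $G'(u)\cdot u=(1-p)\|u\|_{E_\lambda}^2\ne 0$, testing with $u$ yields $\sigma=0$ and hence $J_\lambda'(u)=0$. Testing this identity against the indicator of a single vertex recovers (\ref{equation}) pointwise, so $u_\lambda:=u$ is the desired ground state. The only genuine difficulty is the compactness step driven by $(A_2)$; the remaining arguments are the standard Nehari machinery.
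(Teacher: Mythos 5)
Your proposal is correct and follows essentially the same route as the paper: the Nehari manifold combined with the compact embedding $E_\lambda \hookrightarrow L^{p+1}(V)$ driven by $(A_2)$ (the paper's Lemma 2.6, whose tail-plus-finite-ball proof you reproduce inline), positivity of $m_\lambda$, a bounded minimizing sequence whose weak limit is shown to lie on $\mathcal{N}_\lambda$ via the projection factor and weak lower semicontinuity, and finally removal of the constraint. The only cosmetic difference is the last step: you invoke the Lagrange multiplier theorem using $G'(u)\cdot u=(1-p)\|u\|_{E_\lambda}^2\neq 0$, whereas the paper differentiates $s\mapsto J_\lambda\bigl(t(s)(u_\lambda+s\phi)\bigr)$ at $s=0$; these are equivalent standard ways of showing a Nehari minimizer is a free critical point.
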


To study the behavior of $u_\lambda$ as $\lambda\rightarrow \infty$, we introduce the Dirichlet problem
\begin{align}\label{dirichlet}
\begin{cases}
-\Delta u+u=|u|^{p-1}u \  &\text{in}\  \Omega;\\
u=0,\  &\text{on} \  \partial \Omega.
\end{cases}
\end{align}
It is suitable to study (\ref{dirichlet}) in the Hilbert space $W_0^{1,2}(\Omega)$ which is the completion of $C_c(\Omega)$ under the norm
$$
\|u\|_{W^{1,2}_0(\Omega)}=\left(\int_{\Omega\cup \partial \Omega}|\nabla u|^2 d\mu
+\int_{\Omega}u^2d\mu\right)^{\frac{1}{2}}.
$$
Since the formula for integral by parts which will be proved in Section 2(Lemma \ref{integral1}), in the definition of the norm for $W_0^{1,2}(\Omega)$, we need an additional integral on $\partial \Omega$ for the gradient form of $u$ which is different with the Euclidean case. The functional related to (\ref{dirichlet}) is
\begin{equation}\label{ofunctional}
J_\Omega(u)=\frac{1}{2}\left(\int_{\Omega\cup\partial \Omega}|\nabla u|^2d\mu+\int_{\Omega}u^2d\mu\right)-\frac{1}{p+1}\int_\Omega |u|^{p+1}d\mu.
\end{equation}
The corresponding Nehari manifold is
\begin{equation}\label{dnehari}
\mathcal{N}_\Omega:=\left\{u\in W_0^{1,2}(\Omega)\setminus \{0\}: \|u\|_{W^{1,2}_0(\Omega)}^2=\|u\|_{L^{p+1}(\Omega)}^{p+1}\right\}.
\end{equation}
Similar to (\ref{equation}), the Dirichlet problem also has a ground state solution.

\begin{theorem}\label{oexistence}
Suppose $G=(V,E)$ be a locally finite and connected graph with symmetric weight and uniformly positive measure. Let $\Omega$ be a non-empty, connected and bounded domain in $V$. Then the equation (\ref{dirichlet}) has a ground state solution $u_0\in W_0^{1,2}(\Omega)$.
\end{theorem}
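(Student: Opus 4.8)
The plan is to follow the same Nehari variational scheme used for Theorem \ref{existence}, but to exploit the fact that the hypotheses force $\Omega$ to be a \emph{finite} set of vertices, which renders the analysis substantially simpler. Since $G$ is locally finite and connected and $\Omega$ has uniformly bounded diameter, a ball of finite radius around any fixed vertex of $\Omega$ contains only finitely many vertices; hence both $\Omega$ and $\Omega\cup\partial\Omega$ are finite. Consequently $W_0^{1,2}(\Omega)$ is a finite-dimensional Hilbert space, on which all norms are equivalent and bounded sequences are precompact. This is the decisive structural feature, and it removes the compactness difficulty that is the crux of the corresponding Euclidean problem.

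First I would analyze the fibering map. For any fixed $u\in W_0^{1,2}(\Omega)\setminus\{0\}$ consider $\phi(t)=J_\Omega(tu)$ for $t>0$. Writing it out,
\[
\phi(t)=\frac{t^2}{2}\|u\|_{W_0^{1,2}(\Omega)}^2-\frac{t^{p+1}}{p+1}\|u\|_{L^{p+1}(\Omega)}^{p+1},
\]
and since $p\geq 2$, $\phi$ has a unique positive critical point $t_u$, at which it attains its maximum, with $t_u u\in\mathcal N_\Omega$. This shows $\mathcal N_\Omega\neq\emptyset$ and that every ray through the origin meets $\mathcal N_\Omega$ exactly once. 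On $\mathcal N_\Omega$ one has the identity $J_\Omega(u)=\bigl(\tfrac12-\tfrac1{p+1}\bigr)\|u\|_{W_0^{1,2}(\Omega)}^2$, so minimizing $J_\Omega$ over $\mathcal N_\Omega$ is equivalent to minimizing the squared norm.

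Next I would show $m_\Omega:=\inf_{\mathcal N_\Omega}J_\Omega>0$. On $\mathcal N_\Omega$ the constraint reads $\|u\|_{W_0^{1,2}(\Omega)}^2=\|u\|_{L^{p+1}(\Omega)}^{p+1}$; combining this with the embedding $\|u\|_{L^{p+1}(\Omega)}\leq C\|u\|_{W_0^{1,2}(\Omega)}$, which is immediate in finite dimensions, yields a uniform lower bound $\|u\|_{W_0^{1,2}(\Omega)}\geq\rho>0$ on $\mathcal N_\Omega$, whence $m_\Omega>0$. Taking a minimizing sequence $\{u_n\}\subset\mathcal N_\Omega$ with $J_\Omega(u_n)\to m_\Omega$, the energy identity bounds $\|u_n\|_{W_0^{1,2}(\Omega)}$, so by finite-dimensionality a subsequence converges, $u_n\to u_0$ in $W_0^{1,2}(\Omega)$. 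Continuity of the norm and of the $L^{p+1}$ functional then gives $\|u_0\|_{W_0^{1,2}(\Omega)}\geq\rho$, so $u_0\neq 0$, and $u_0\in\mathcal N_\Omega$ with $J_\Omega(u_0)=m_\Omega$.

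Finally I would verify that the minimizer is a genuine solution. A Lagrange multiplier argument on the constraint $\Psi(u):=J_\Omega'(u)\cdot u=0$ produces $\sigma$ with $J_\Omega'(u_0)=\sigma\Psi'(u_0)$; testing against $u_0$ and using $\Psi'(u_0)\cdot u_0=(1-p)\|u_0\|_{W_0^{1,2}(\Omega)}^2\neq 0$ forces $\sigma=0$, so $J_\Omega'(u_0)=0$. The integration-by-parts formula (Lemma \ref{integral1}) then converts the weak identity $J_\Omega'(u_0)\cdot\varphi=0$ for all $\varphi\in W_0^{1,2}(\Omega)$ into the pointwise equation $-\Delta u_0+u_0=|u_0|^{p-1}u_0$ on $\Omega$, with the condition $u_0=0$ on $\partial\Omega$ built into membership in $W_0^{1,2}(\Omega)$. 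I expect the only genuinely delicate point to be the bookkeeping of the boundary term appearing in $\|\cdot\|_{W_0^{1,2}(\Omega)}$ when applying integration by parts, so that the Euler--Lagrange computation produces exactly the stated equation on $\Omega$ and nothing extraneous on $\partial\Omega$.
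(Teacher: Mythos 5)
Your proof is correct, and it follows the same Nehari-manifold skeleton that the paper uses (the paper's ``proof'' of this theorem is in fact only a remark stating that the argument for Theorem \ref{existence} carries over verbatim to $W_0^{1,2}(\Omega)$, with the compactness supplied by Lemma \ref{compact2}). The difference is in how two of the steps are executed. Where the paper reuses its infinite-dimensional machinery --- a weakly convergent minimizing sequence, weak lower semicontinuity of the norm, and the scaling contradiction ``if $\|u_\lambda\|^2<\|u_\lambda\|_{p+1}^{p+1}$ then some $t u_\lambda$ with $t\in(0,1)$ lies on the manifold and has energy $t^2 m_\lambda<m_\lambda$'' (Lemma \ref{minimize}), followed by the fibering-map perturbation $\gamma(s)=J_\lambda(t(s)(u_\lambda+s\phi))$ to show criticality (Lemma \ref{criticalpoint}) --- you make the finiteness of $\Omega$ the central structural fact: $W_0^{1,2}(\Omega)$ is finite-dimensional, so the minimizing sequence converges \emph{strongly} and the constraint $\|u\|_{W_0^{1,2}(\Omega)}^2=\|u\|_{L^{p+1}(\Omega)}^{p+1}$ passes to the limit trivially, and you obtain criticality by a standard Lagrange multiplier argument, killing the multiplier via $\Psi'(u_0)\cdot u_0=(1-p)\|u_0\|_{W_0^{1,2}(\Omega)}^2\neq 0$. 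Note that the paper quietly relies on the same finiteness (it is the one-line proof of Lemma \ref{compact2}), so your observation is not foreign to the paper; you have just promoted it from a technical ingredient to the organizing principle. What each route buys: yours is self-contained, shorter, and avoids weak-convergence bookkeeping entirely, which is the natural proof for this particular theorem; the paper's route has the advantage of uniformity, since the identical four lemmas also prove Theorem \ref{existence}, where $E_\lambda$ is genuinely infinite-dimensional and finite-dimensionality is unavailable. Your final step (Lemma \ref{integral1} converting $J_\Omega'(u_0)=0$ into the pointwise equation on $\Omega$, with the boundary condition encoded in membership in $W_0^{1,2}(\Omega)$) matches the paper's Remark \ref{pointwise}, and you are right that the only delicacy there is that the gradient integral runs over $\Omega\cup\partial\Omega$ while the zero-order and nonlinear terms run over $\Omega$ only.
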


In fact, (\ref{dirichlet}) is some kind of limit problem for (\ref{equation}) as $\lambda\rightarrow +\infty$, where $\Omega$ is the potential well, namely $\Omega=\{x\in V: a(x)=0\}$. More precisely, we have

\begin{theorem}\label{convergence}
Under the same assumptions as in Theorem \ref{existence}, we have that, for any sequence $\lambda_k\rightarrow \infty$, up to a subsequence, the corresponding ground state solutions $u_{\lambda_k}$ of (\ref{equation}) converges in $W^{1,2}(V)$ to a ground state solution of (\ref{dirichlet}).
\end{theorem}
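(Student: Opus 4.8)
The plan is to extract a weak limit of the ground states $u_{\lambda_k}$ in $W^{1,2}(V)$, show the limit lives in $W_0^{1,2}(\Omega)$, and then run a two-sided energy comparison that simultaneously identifies it as a ground state of (\ref{dirichlet}) and upgrades weak to strong convergence. Writing $m_\Omega:=\inf_{\mathcal{N}_\Omega}J_\Omega$, the first step is a uniform energy bound. The key observation is that any function supported on $\Omega$ (extended by $0$) has $\int_V\lambda a\,u^2d\mu=0$ because $a\equiv0$ on $\Omega$, so its $E_\lambda$-norm and its $W_0^{1,2}(\Omega)$-norm coincide for every $\lambda$; feeding the Dirichlet ground state from Theorem \ref{oexistence} into $\mathcal{N}_{\lambda_k}$ this way gives $m_{\lambda_k}\le m_\Omega$ for all $k$. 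Since on the Nehari manifold $J_\lambda(u)=(\frac12-\frac1{p+1})\|u\|_{E_\lambda}^2$, this bounds $\|u_{\lambda_k}\|_{E_{\lambda_k}}$, hence $\|u_{\lambda_k}\|_{W^{1,2}(V)}$, uniformly in $k$ (using $\lambda a+1\ge1$). A matching uniform lower bound $m_{\lambda_k}\ge c_0>0$ follows from the embedding $W^{1,2}(V)\hookrightarrow L^{p+1}(V)$, whose constant is $\lambda$-independent since $\|u\|_{W^{1,2}}\le\|u\|_{E_\lambda}$.

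Next I would pass to the limit. By weak compactness, along a subsequence $u_{\lambda_k}\rightharpoonup u$ in $W^{1,2}(V)$; since $\mu(x)\ge\mu_{\min}$, each point evaluation is a bounded functional, so $u_{\lambda_k}(x)\to u(x)$ pointwise. The uniform bound on $\int_V\lambda_k a\,u_{\lambda_k}^2d\mu$ forces $\int_V a\,u_{\lambda_k}^2d\mu=O(1/\lambda_k)$, and Fatou's lemma then gives $\int_V a\,u^2d\mu=0$, so $u$ vanishes wherever $a>0$; as $\partial\Omega\subset\{a>0\}$, this means $u\in W_0^{1,2}(\Omega)$, for which $\|u\|_{W^{1,2}(V)}=\|u\|_{W_0^{1,2}(\Omega)}=\|u\|_{E_{\lambda_k}}$ all agree.

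The decisive step is the no-loss-of-mass estimate $\int_V|u_{\lambda_k}|^{p+1}d\mu\to\int_V|u|^{p+1}d\mu$. Using $(A_2)$, fix $R$ with $\Omega\subset B_R:=\{d(\cdot,x_0)\le R\}$ and $a\ge1$ outside $B_R$; then $\int_{d>R}u_{\lambda_k}^2d\mu\le\lambda_k^{-1}\int_V\lambda_k a\,u_{\lambda_k}^2d\mu\to0$, and the graph bound $\int_{d>R}|v|^{p+1}d\mu\le(\sup|v|)^{p-1}\int_{d>R}v^2d\mu$ together with the uniform $W^{1,2}$ bound makes the $L^{p+1}$ tails vanish as $k\to\infty$. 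On the finite set $B_R$ the integral is a finite sum converging by pointwise convergence, and $u$ is supported in $B_R$, so the full integrals converge. Since $\int_V|u_{\lambda_k}|^{p+1}d\mu=\|u_{\lambda_k}\|_{E_{\lambda_k}}^2$ is bounded below by a positive constant, this yields $u\neq0$ and $m_{\lambda_k}\to(\frac12-\frac1{p+1})\|u\|_{L^{p+1}(\Omega)}^{p+1}$.

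Finally I would close the estimates. Weak lower semicontinuity gives $\|u\|_{W_0^{1,2}(\Omega)}^2\le\liminf\|u_{\lambda_k}\|_{E_{\lambda_k}}^2=\|u\|_{L^{p+1}(\Omega)}^{p+1}$, so the unique $t>0$ with $tu\in\mathcal{N}_\Omega$ satisfies $t\le1$; hence
$$m_\Omega\le J_\Omega(tu)=(\tfrac12-\tfrac1{p+1})t^2\|u\|_{W_0^{1,2}(\Omega)}^2\le(\tfrac12-\tfrac1{p+1})\|u\|_{W_0^{1,2}(\Omega)}^2\le\lim_k m_{\lambda_k}\le m_\Omega.$$
Every inequality is therefore an equality: $t=1$, so $u\in\mathcal{N}_\Omega$ and $J_\Omega(u)=m_\Omega$, i.e.\ $u$ is a ground state of (\ref{dirichlet}). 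The same chain yields $\limsup_k\|u_{\lambda_k}\|_{W^{1,2}(V)}^2\le\|u\|_{W^{1,2}(V)}^2$, which with lower semicontinuity gives norm convergence, hence strong convergence $u_{\lambda_k}\to u$ in $W^{1,2}(V)$. The main obstacle I expect is the no-loss-of-mass step: the rest is bookkeeping with the Nehari identity, but that compactness is exactly where $(A_2)$ must be invoked to prevent the ground states from spreading out as $\lambda\to\infty$.
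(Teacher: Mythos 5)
Your proof is correct, and although it is assembled from the same Nehari-manifold ingredients, its organization is genuinely different from the paper's and in places tighter. The paper first proves a standalone lemma (Lemma \ref{limit}) that $m_\lambda\to m_\Omega$, whose starting point is the assertion $m_\lambda<m_\Omega$ ``due to the maximum principle'' (left unproved there), and it then deduces the theorem by two separate contradiction arguments: if $\lambda_k\int_V a\,u_{\lambda_k}^2\,d\mu\to\delta>0$, or if $\liminf_k\int_V|\nabla u_{\lambda_k}|^2\,d\mu>\int_V|\nabla u_0|^2\,d\mu$, then the limit could be rescaled by some $\alpha\in(0,1)$ onto $\mathcal{N}_\Omega$ with $J_\Omega(\alpha u_0)<m_\Omega$. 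You instead use only the one-sided bound $m_{\lambda_k}\le m_\Omega$, obtained cleanly by inserting the zero-extension of the Dirichlet ground state into $\mathcal{N}_{\lambda_k}$ (exploiting that the $E_\lambda$- and $W_0^{1,2}(\Omega)$-norms coincide on functions supported in $\Omega$, so no maximum principle is needed), and you close with a single sandwich of inequalities that simultaneously recovers $\lim_k m_{\lambda_k}=m_\Omega$ (the content of Lemma \ref{limit}), shows $u\in\mathcal{N}_\Omega$ with $J_\Omega(u)=m_\Omega$, and forces convergence of norms, whence strong convergence follows from weak convergence plus norm convergence in a Hilbert space. Two further points in your favor: your Fatou argument for $u|_{\{a>0\}}=0$ replaces the paper's pointwise blow-up argument, and your explicit tail estimate via $(A_2)$ honestly justifies $\|u_{\lambda_k}\|_{L^{p+1}(V)}\to\|u\|_{L^{p+1}(V)}$ for a sequence whose ambient spaces $E_{\lambda_k}$ vary with $k$ --- a point the paper glosses over, since Lemma \ref{compact1} is stated for a single fixed $\lambda$ (its use there can be repaired by noting the sequence is bounded in the fixed space $E_{\lambda_0}$ for any fixed $\lambda_0>1$). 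What the paper's route states explicitly and yours leaves implicit is the convergence $m_\lambda\to m_\Omega$ for the full family, but that too follows from your argument by the usual subsequence-of-a-subsequence remark.
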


We organize the rest of the paper as follows. In Section 2, we introduce some known results and basic properties which are useful for the proof of our results. In Section 3, we give proofs of the main results. In Section 4, we carry out numerical experiments on a finite graph and use the numerical solutions to illustrate the convergence of $u_\lambda$ as $\lambda\rightarrow \infty$.

\section{Preliminaries}

From now on , we always assume that $G=(V,E)$ is a locally finite and connected graph with symmetric weight and uniformly positive measure. In this section, we shall introduce some basic analytical properties on graphs and prove some compactness results related to $E_\lambda$ and $W_0^{1,2}(\Omega)$.

First we present two lemmas about integral by parts on graphs.

\begin{lemma}\label{integral}
Suppose that $u\in W^{1,2}(V)$ and its Laplacian $\Delta u$ is well defined. Then for any $v\in W^{1,2}(V)$ we have
\begin{equation}\label{part1}
\int_V \nabla u\cdot\nabla vd\mu=\int_V \Gamma(u,v)d\mu=-\int_V \Delta u v  d\mu.
\end{equation}
\end{lemma}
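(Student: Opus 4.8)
The plan is to verify the identity by direct computation, reducing both outer integrals to sums over vertices via the definition $\int_V f\,d\mu=\sum_{x\in V}\mu(x)f(x)$, and then to match the two resulting expressions using only the symmetry $w_{xy}=w_{yx}$ of the weight. First I would expand the middle term: since $\Gamma(u,v)(x)=\frac{1}{2\mu(x)}\sum_{y\sim x}w_{xy}(u(y)-u(x))(v(y)-v(x))$, the factor $\mu(x)$ from the integral cancels the $\mu(x)$ in the denominator, giving
\[
\int_V\Gamma(u,v)\,d\mu=\frac{1}{2}\sum_{x\in V}\sum_{y\sim x}w_{xy}(u(y)-u(x))(v(y)-v(x)),
\]
a double sum over all ordered adjacent pairs. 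Likewise, expanding $\Delta u(x)=\frac{1}{\mu(x)}\sum_{y\sim x}w_{xy}(u(y)-u(x))$ and cancelling $\mu(x)$ gives
\[
-\int_V(\Delta u)\,v\,d\mu=-\sum_{x\in V}\sum_{y\sim x}w_{xy}(u(y)-u(x))\,v(x).
\]

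The core of the argument is then a symmetrization. I would split the product in the gradient integral into the two pieces $\frac{1}{2}\sum_{x}\sum_{y\sim x}w_{xy}(u(y)-u(x))v(y)$ and $-\frac{1}{2}\sum_{x}\sum_{y\sim x}w_{xy}(u(y)-u(x))v(x)$. Relabelling the dummy indices $x\leftrightarrow y$ in the first piece and using $w_{xy}=w_{yx}$ turns it into $-\frac{1}{2}\sum_{x}\sum_{y\sim x}w_{xy}(u(y)-u(x))v(x)$, which coincides with the second piece. Hence the two pieces are equal, their sum equals $-\sum_{x}\sum_{y\sim x}w_{xy}(u(y)-u(x))v(x)$, and this is exactly the expression obtained above for $-\int_V(\Delta u)\,v\,d\mu$. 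The leftmost equality $\int_V\nabla u\cdot\nabla v\,d\mu=\int_V\Gamma(u,v)\,d\mu$ is merely the definition of the pairing $\nabla u\cdot\nabla v:=\Gamma(u,v)$.

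The step I expect to be the main obstacle is not algebraic but analytic: since $G$ is only locally finite, $V$ may be infinite, so the reindexing and interchange of the order of summation in the symmetrization require absolute convergence of the double sum. I would establish this by Cauchy--Schwarz, bounding $\sum_{x}\sum_{y\sim x}w_{xy}|u(y)-u(x)|\,|v(y)-v(x)|$ by
\[
\Big(\sum_{x}\sum_{y\sim x}w_{xy}(u(y)-u(x))^2\Big)^{1/2}\Big(\sum_{x}\sum_{y\sim x}w_{xy}(v(y)-v(x))^2\Big)^{1/2},
\]
whose two factors equal $\big(2\int_V|\nabla u|^2\,d\mu\big)^{1/2}$ and $\big(2\int_V|\nabla v|^2\,d\mu\big)^{1/2}$, both finite because $u,v\in W^{1,2}(V)$. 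With absolute convergence in hand, rearrangement of series legitimizes every reindexing used above, while the standing hypothesis that $\Delta u$ is well defined guarantees that the right-hand sum is meaningful vertex by vertex.
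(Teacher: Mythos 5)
Your proof is correct and follows essentially the same route as the paper's: expand $\int_V\Gamma(u,v)\,d\mu$ into a double sum over adjacent pairs, split off the $v(y)$ and $v(x)$ terms, and relabel the dummy indices $x\leftrightarrow y$ using $w_{xy}=w_{yx}$ to identify the two pieces, yielding $-\int_V(\Delta u)v\,d\mu$. The only difference is that you explicitly justify the reindexing by establishing absolute convergence of the double sum via Cauchy--Schwarz, a point the paper's proof passes over in silence.
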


\begin{proof}
By direct computations, we have
\begin{eqnarray*}
\int_V \Gamma(u,v)d\mu&=&\int_V\frac{1}{2\mu(x)}\sum_{y\sim x}w_{xy}(u(y)-u(x))(v(y)-v(x))d\mu\\
&=&\frac{1}{2}\sum_{x\in V}\sum_{y\sim x}w_{xy}(u(y)-u(x))(v(y)-v(x))\\
&=&\frac{1}{2}\sum_{x\in V}\sum_{y\sim x}w_{xy}(u(y)-u(x)))v(y)-\frac{1}{2}\sum_{x\in V}\sum_{y\sim x}w_{xy}(u(y)-u(x)))v(x)\\
&=&\frac{1}{2}\sum_{y\in V}\sum_{x\sim y}w_{xy}(u(y)-u(x)))v(y)-\frac{1}{2}\sum_{x\in V}\sum_{y\sim x}w_{xy}(u(y)-u(x)))v(x)\\
&=&-\sum_{x\in V}\sum_{y\sim x}w_{xy}(u(y)-u(x)))v(x)\\
&=&-\int_V \Delta u vd\mu.
\end{eqnarray*}
\end{proof}

When we integrate by parts on a bounded domain of a graph, there shall be a boundary term even if $v$ has a compact support. Therefore the formula becomes different with the Euclidean case. Precisely, we have

\begin{lemma}\label{integral1}
Suppose that $u\in W^{1,2}(V)$ and its Laplacian $\Delta u$ is well defined. Let $v$ be a function belongs to $C_c(\Omega)$, where $\Omega\subset V$ is a bounded domain. Then we have
\begin{equation}\label{part1}
\int_{\Omega\cup \partial \Omega} \nabla u\cdot\nabla vd\mu=\int_{\Omega\cup \partial \Omega} \Gamma(u,v)d\mu=-\int_{\Omega} \Delta u v  d\mu.
\end{equation}
\end{lemma}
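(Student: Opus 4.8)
The plan is to mimic the computation in Lemma 2.2, but to keep careful track of which vertices actually contribute once $v$ is supported in $\Omega$. Since $v\in C_c(\Omega)$, we have $v(x)=0$ for every $x\notin\Omega$; in particular $v$ vanishes on $\partial\Omega$ and on all of $V\setminus(\Omega\cup\partial\Omega)$. The key point is that even though $v$ vanishes off $\Omega$, the gradient form $\Gamma(u,v)(x)$ involves differences $v(y)-v(x)$ across edges, so it is not supported in $\Omega^\circ$: an edge $xy$ with $x\in\partial\Omega$ and $y\in\Omega$ still produces a nonzero contribution because $v(y)\neq 0$. This is exactly why the left-hand integral is taken over $\Omega\cup\partial\Omega$ rather than over $\Omega$ alone, and it is the genuine point of difference from the Euclidean case that the authors flagged in the text.

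First I would expand $\int_{\Omega\cup\partial\Omega}\Gamma(u,v)\,d\mu$ using the definition \eqref{gradient} and the integral \eqref{integral}, obtaining
\[
\int_{\Omega\cup\partial\Omega}\Gamma(u,v)\,d\mu
=\frac{1}{2}\sum_{x\in\Omega\cup\partial\Omega}\sum_{y\sim x}w_{xy}\bigl(u(y)-u(x)\bigr)\bigl(v(y)-v(x)\bigr).
\]
As in Lemma 2.2, I would split the product $(v(y)-v(x))$ into its two terms and relabel the summation indices $x\leftrightarrow y$ in the $v(y)$-term, using the symmetry $w_{xy}=w_{yx}$. The only subtlety is that the index set is now $\Omega\cup\partial\Omega$ rather than all of $V$, so after relabeling I must check that no boundary terms are lost. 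Here the definition \eqref{boundary} of $\partial\Omega$ is what makes everything work: if $x\in\Omega\cup\partial\Omega$ and $y\sim x$ with $y\notin\Omega\cup\partial\Omega$, then $v(y)=v(x)=0$ (since such a $y$ cannot be in $\Omega$, and $x$ being adjacent to an interior vertex forces the relevant value to vanish), so those edges contribute zero; and every edge touching a vertex where $v$ is nonzero has both endpoints inside $\Omega\cup\partial\Omega$. Thus summing over $\Omega\cup\partial\Omega$ captures all edges on which $\Gamma(u,v)$ is nonzero, and the relabeling is legitimate.

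After combining the two sums I expect to arrive at
\[
\int_{\Omega\cup\partial\Omega}\Gamma(u,v)\,d\mu
=-\sum_{x\in\Omega\cup\partial\Omega}\sum_{y\sim x}w_{xy}\bigl(u(y)-u(x)\bigr)v(x).
\]
Now I use that $v(x)=0$ for $x\in\partial\Omega$, which collapses the outer sum from $\Omega\cup\partial\Omega$ down to $\Omega$; recognizing the inner sum as $\mu(x)\,\Delta u(x)$ via \eqref{laplacian} then yields $-\int_{\Omega}\Delta u\,v\,d\mu$, which is the claimed identity. The main obstacle, and the step I would write out most carefully, is the bookkeeping in the index relabeling: I must verify that restricting the double sum to $x\in\Omega\cup\partial\Omega$ neither drops an edge on which the integrand is nonzero nor introduces a spurious one, and that the symmetry $w_{xy}=w_{yx}$ together with the structure of $\partial\Omega$ makes the swapped sum match the original. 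Everything else is the same algebra as in Lemma 2.2.
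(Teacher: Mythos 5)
Your proposal is correct and follows essentially the same argument as the paper: expand $\int_{\Omega\cup\partial\Omega}\Gamma(u,v)\,d\mu$ as a double sum, split the factor $v(y)-v(x)$, relabel indices using $w_{xy}=w_{yx}$, and use the support of $v$ together with the definition of $\partial\Omega$ to reduce to $-\int_\Omega \Delta u\, v\,d\mu$. The only cosmetic difference is order of operations — the paper restricts the index sets via the support of $v$ before relabeling, while you relabel first and then collapse the sum — and your bookkeeping of the boundary edges is exactly the verification the paper performs implicitly.
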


\begin{proof}
Similar to the proof of Lemma \ref{integral}, we have
\begin{eqnarray*}
\int_{\Omega\cup \partial \Omega} \Gamma(u,v)d\mu
&=&\frac{1}{2}\sum_{x\in \Omega\cup \partial \Omega}\sum_{y\sim x}w_{xy}(u(y)-u(x))(v(y)-v(x))\\
&=&\frac{1}{2}\sum_{x\in \Omega\cup \partial \Omega}\sum_{y\sim x}w_{xy}(u(y)-u(x)))v(y)-\frac{1}{2}\sum_{x\in \Omega\cup \partial \Omega}\sum_{y\sim x}w_{xy}(u(y)-u(x)))v(x)\\
&=&\frac{1}{2}\sum_{y\in \Omega}\sum_{x\sim y}w_{xy}(u(y)-u(x)))v(y)-\frac{1}{2}\sum_{x\in \Omega}\sum_{y\sim x}w_{xy}(u(y)-u(x)))v(x)\\
&=&-\sum_{x\in \Omega}\sum_{y\sim x}w_{xy}(u(y)-u(x)))v(x)\\
&=&-\int_\Omega \Delta u vd\mu.
\end{eqnarray*}
\end{proof}

Due to Lemma \ref{integral}, we can define the weak solution the equation (\ref{equation}) as

\begin{definition}\label{weaksolution}
Suppose $u\in E_\lambda$. If for any $\varphi\in C_c(V)$, there holds
$$
\int_V(\Gamma(u,\varphi)+(\lambda a+1)u\varphi)d\mu
=\int_V |u|^{p-1}u \varphi d\mu,
$$
$u$ is called a weak solution of (\ref{equation}).
\end{definition}

According to Lemma \ref{integral1}, we define the weak solution of (\ref{dirichlet}) as

\begin{definition}\label{weaksolution1}
Suppose $u\in W^{1,2}_0(\Omega)$. If for any $\varphi\in C_c(\Omega)$, there holds
$$
\int_{\Omega\cup \partial \Omega}\Gamma(u,\varphi)d\mu+\int_{\Omega}u\varphi d\mu
=\int_\Omega |u|^{p-1}u \varphi d\mu,
$$
$u$ is called a weak solution of (\ref{equation}).
\end{definition}

\begin{remark}\label{pointwise}
If u is a weak solution of (\ref{dirichlet}), by Lemma \ref{integral1}, for any test function $\varphi\in C_c(\Omega)$, we have
$$
\int_\Omega(-\Delta u \varphi+u\varphi)d\mu
=\int_\Omega |u|^{p-1}u \varphi d\mu.
$$
For any fixed $x_0\in \Omega$, take the test function to be
\begin{align*}
\varphi_0(x)=\begin{cases}
-\Delta u(x_0)+u(x_0)-|u|^{p-1}u(x_0)\ \ & x=x_0;\\
0\ \ &x\not=x_0.
\end{cases}
\end{align*}
Then we get
$$
-\Delta u(x_0)+u(x_0)=|u|^{p-1}u(x_0),
$$
which tells us that, in fact, a weak solution is also a point-wise solution. For a weak solution of (\ref{equation}), we can prove the result similarly and we omit it here.
\end{remark}

Finally in this section, we present some results about the compactness of $E_\lambda$ and $W^{1,2}_0(\Omega)$. Since we are concerned with the behaviors of solutions $u_\lambda$ of (\ref{equation}) as $\lambda\rightarrow \infty$, without loss of generality, we can assume that $\lambda>1$ in the next lemma. We use $\|\cdot\|_{q,V}$ and $\|\cdot\|_{q,\Omega}$ to denote the $L^q$ norms on $V$ and $\Omega$ respectively and we may omit the subscripts $V$ and $\Omega$ if it can be understood from the context. The next two lemmas are similar to results in \cite{grigor1,grigor2}.

\begin{lemma}\label{compact1}
Assume that $\lambda>1$ and $a(x)$ satisfies $(A_1)$ and $(A_2)$. Then $E_\lambda$ is continuously embedded into $L^q(V)$ for any $q\in [2,\infty)$ and the embedding is independent of $\lambda$. Namely, there exists a constant $C$ depending only on $q$ such that for any $u\in E_\lambda$,
$$
\|u\|_{q,V}\leq C\|u\|_{E_\lambda}.
$$
Moreover, for any bounded sequence $\{u_k\}\subset E_\lambda$, there exists $u\in E_\lambda$ such that, up to a subsequence,
\begin{align*}
\begin{cases}
u_k\rightharpoonup u\ &\text{in}\  E_\lambda;\\
u_k(x)\rightarrow u(x)\ &\forall x\in V;\\
u_k\rightarrow u\ &\text{in}\  L^{q}(V).
\end{cases}
\end{align*}
\end{lemma}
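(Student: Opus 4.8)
The plan is to derive everything from a single pointwise estimate that converts control of the $E_\lambda$ norm into an $L^\infty$ bound. Since the measure is uniformly positive, for any $u\in E_\lambda$ and any $x\in V$ we have $\mu_{\min}|u(x)|^2\leq \mu(x)u(x)^2\leq \int_V u^2\,d\mu=\|u\|_{2,V}^2$, and because $a\geq 0$ and $\lambda>1$ the $E_\lambda$ norm dominates the $L^2$ norm, $\|u\|_{2,V}\leq \|u\|_{E_\lambda}$. Hence $\sup_{x\in V}|u(x)|\leq \mu_{\min}^{-1/2}\|u\|_{E_\lambda}$. For $q\geq 2$ I would then split $|u|^q=|u|^{q-2}|u|^2$ and bound the first factor by the supremum, obtaining $\|u\|_{q,V}^q\leq (\mu_{\min}^{-1/2}\|u\|_{2,V})^{q-2}\|u\|_{2,V}^2=\mu_{\min}^{-(q-2)/2}\|u\|_{2,V}^q$. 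This yields the embedding with $C=\mu_{\min}^{-(q-2)/(2q)}$, a constant depending only on $q$ and $\mu_{\min}$, and in particular independent of $\lambda$.

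For the second part, since $E_\lambda$ is a Hilbert space, any bounded sequence $\{u_k\}$ has a subsequence with $u_k\rightharpoonup u$ weakly in $E_\lambda$, and I would fix this subsequence throughout; the weak limit $u$ lies in $E_\lambda$ automatically. Pointwise convergence is then immediate: for each fixed $x$ the evaluation map $w\mapsto w(x)$ is a bounded linear functional on $E_\lambda$ by the estimate above, so weak convergence forces $u_k(x)\to u(x)$ for every $x\in V$, with the limit consistent with the weak limit.

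The substantial step is the strong $L^q(V)$ convergence, which is where assumption $(A_2)$ enters. Set $v_k=u_k-u$, so that $v_k\rightharpoonup 0$, $v_k(x)\to 0$ pointwise, and $\{v_k\}$ stays bounded in $E_\lambda$, say by $M$; the sup bound also gives $\|v_k\|_{\infty}\leq \mu_{\min}^{-1/2}M$. Given $\epsilon>0$, I would split the sum defining $\|v_k\|_{q,V}^q$ over a ball $B_R=\{x:d(x,x_0)\leq R\}$ and its complement. On the tail I use $|v_k|^q\leq \|v_k\|_{\infty}^{q-2}|v_k|^2$ and exploit that, by $(A_2)$, $a(x)\geq A$ outside a large enough ball, so that $\sum_{d(x,x_0)>R}\mu(x)v_k(x)^2\leq A^{-1}\int_V\lambda a\,v_k^2\,d\mu\leq A^{-1}M^2$ (using $\lambda>1$ so that $\lambda a\geq a$). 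Choosing $A$ large makes the tail smaller than $\epsilon$, uniformly in $k$ and independent of $\lambda$; this fixes $R$. Since the graph is locally finite and connected, $B_R$ is a finite set, so the remaining finite sum tends to $0$ as $k\to\infty$ by pointwise convergence. Letting $k\to\infty$ and then $\epsilon\to 0$ gives $\|v_k\|_{q,V}\to 0$.

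I expect the main obstacle to be precisely this tail estimate: one must rule out loss of compactness through mass escaping to infinity, and the only mechanism available is the coercivity of the potential in $(A_2)$, which bounds the $L^2$ mass of the tail by the $E_\lambda$ norm. The interpolation inequality $|v_k|^q\leq \|v_k\|_{\infty}^{q-2}|v_k|^2$ is what upgrades this $L^2$ tail control to an $L^q$ statement, and the key bookkeeping point is to verify that every constant is uniform in $\lambda$ for $\lambda>1$, which holds because $\lambda a\geq a$ throughout.
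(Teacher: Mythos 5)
Your proposal is correct and follows essentially the same route as the paper: the uniform positivity of $\mu$ gives the $L^\infty$ (hence, by the interpolation $|u|^q\le\|u\|_\infty^{q-2}|u|^2$, the $L^q$) embedding with constants independent of $\lambda$, weak compactness in the Hilbert space $E_\lambda$ plus boundedness of the evaluation functionals gives pointwise convergence, and the tail of the $L^2$ mass is killed by the coercivity of $a$ from $(A_2)$ together with $\lambda>1$, leaving a finite ball handled by pointwise convergence. The only cosmetic differences are that you run the ball/tail splitting directly in $L^q$ while the paper first proves $L^2$ convergence and then upgrades to $L^\infty$ and $L^q$, and that you obtain pointwise convergence from weak convergence in $E_\lambda$ rather than from weak convergence in $L^2$ tested against indicator functions; both are equivalent in substance.
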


\begin{proof}
Suppose $u\in E_\lambda$. At any vertex $x_0\in V$, we have
\begin{eqnarray*}
\|u\|_{E_\lambda}^2&=&\int_V (|\nabla u|^2+(\lambda a+1)u^2)d\mu\\
&\geq&\int_V u^2d\mu\\
&=&\sum_{x\in V}\mu(x)u^2(x)\\
&\geq&\mu_{\min} u^2(x_0),
\end{eqnarray*}
which gives
$$
u(x_0)\leq \sqrt{\frac{1}{\mu_{\min}}}\|u\|_{E_\lambda}.
$$
Thus $E_\lambda\hookrightarrow L^{\infty}(V)$ continuously and the embedding is independent of $\lambda$. Then the interpolation gives continuous embedding $E_\lambda \hookrightarrow L^q(V)$ for any $2\leq q\leq \infty$.

Since $E_\lambda$ is a Hilbert space, for $\{u_k\}$ bounded in $E_\lambda$, we have that, up to a subsequence, $u_k\rightharpoonup u$ in $E_\lambda$. On the other hand, $\{u_k\}\subset E_\lambda$ is also bounded in $L^2(V)$ and we have the weak convergence in $L^2(V)$, which tells that, for any $\phi \in L^2(V)$,
\begin{equation}\label{wconverge}
\lim_{k\rightarrow \infty}\int_V (u_k-u) \phi d\mu
=\lim_{k\rightarrow \infty}\sum_{x\in V} \mu(x)(u_k-u)(x)\phi(x)=0,
\end{equation}
Take any $x_0\in V$ and let
\begin{align*}
\phi_0(x)=\begin{cases}
1\  &x=x_0;\\
0\  &x\neq x_0.
\end{cases}
\end{align*}
Obviously it belongs to $L^2(V)$. By substituting $\phi_0$ into (\ref{wconverge}) we get
$$
\lim_{k\rightarrow \infty} \mu(x_0)(u_k-u)(x_0)=0,
$$
which implies that $\lim_{k\rightarrow \infty}u_k(x)=u(x) $ for any $x\in V$.

Suppose $u_k\rightharpoonup u$ in $E_\lambda$. Because of the boundedness of $\{u_k\}$, there exists some constant $C_0$ such that
$$
\|u_k-u\|_{E_\lambda}^2\leq C_0.
$$
Since $a(x)$ satisfies $(A_2)$, $\forall \epsilon>0$, there exists some constant $R>0$ such that when $\text{dist}(x,x_0)>R$,
$$
a(x)\geq \frac{C_0}{\epsilon}.
$$
Noticing $\lambda>1$, we have
\begin{eqnarray}\label{lp1}
\int_{\text{dist}(x,x_0)>R} |u_k-u|^2d \mu
&\leq& \frac{\epsilon}{C_0}\int_{\text{dist}(x,x_0)>R} a|u_k-u|^2d \mu\nonumber\\
&\leq& \frac{\epsilon}{C_0}\|u_k-u\|^2_{E_\lambda}\nonumber\\
&\leq& \epsilon.
\end{eqnarray}
On the other hand, since $\{x\in V: \text{dist}(x,x_0)\leq R\}$ is a finite set and $u_k(x)\rightarrow u(x)$ for any $x\in V$ as $k\rightarrow \infty$, we have
$$
\lim_{k\rightarrow \infty}\int_{\text{dist}(x,x_0)\leq R} |u_k-u|^2d \mu=0.
$$
This together with (\ref{lp1}) gives that
$$
\lim_{k\rightarrow \infty}\int_{V} |u_k-u|^2d \mu=0.
$$
When $p=\infty$, we have
$$
\|u_k-u\|_{L^{\infty}(V)}^2\leq \frac{1}{\mu_{\min}}\int_V |u_k-u|^2d\mu\rightarrow 0\  \text{as}\  k\rightarrow \infty.
$$
Finally, for any $2<p<\infty$,
$$
\int_V |u_k-u|^pd\mu\leq \|u_k-u\|_{L^{\infty}(V)}^{p-2}\int_V |u_k-u|^2d\mu\rightarrow 0\  \text{as}\  k\rightarrow \infty.
$$
\end{proof}

For the space $W_0^{1,2}(\Omega)$, we also have

\begin{lemma}\label{compact2}
Assume that $\Omega$ is a bounded domain in $V$. Then $W_0^{1,2}(\Omega)$ is continuously embedded into $L^q(\Omega)$ for any $q\in [1,\infty]$. Namely, there exists a constant $C$ depending only on $q$ and $\Omega$ such that for any $u\in W_0^{1,2}(\Omega)$,
$$
\|u\|_{q,\Omega}\leq C\|u\|_{W_0^{1,2}(\Omega)}.
$$
Moreover, for any bounded sequence $\{u_k\}\subset W_0^{1,2}(\Omega)$, there exists $u\in W_0^{1,2}(\Omega)$ such that, up to a subsequence,
\begin{align*}
\begin{cases}
u_k\rightharpoonup u\ &\text{in}\  W_0^{1,2}(\Omega);\\
u_k(x)\rightarrow u(x)\ &\forall x\in \Omega;\\
u_k\rightarrow u\ &\text{in}\  L^{q}(\Omega).
\end{cases}
\end{align*}
\end{lemma}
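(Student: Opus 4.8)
The plan is to exploit the fact that, because $G$ is locally finite and $\Omega$ is a bounded domain, $\Omega$ is in fact a \emph{finite} set of vertices. Indeed, fix $x_0\in\Omega$; boundedness means every $x\in\Omega$ satisfies $d(x,x_0)\le R$ for some fixed $R$, and in a locally finite graph the ball $\{x:d(x,x_0)\le R\}$ contains only finitely many vertices. Consequently $\partial\Omega$ is finite as well, and since compact support on a graph means finite support, $C_c(\Omega)$ is already complete; thus $W_0^{1,2}(\Omega)$ is a finite-dimensional Hilbert space. Once this is recognized, both assertions become essentially elementary.

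For the continuous embedding I would first establish the $L^\infty$ bound exactly as in Lemma \ref{compact1}. For any $x_0\in\Omega$, the uniformly positive measure gives
\begin{equation*}
\|u\|_{W_0^{1,2}(\Omega)}^2\ge\int_\Omega u^2\,d\mu=\sum_{x\in\Omega}\mu(x)u^2(x)\ge\mu_{\min}\,u^2(x_0),
\end{equation*}
so that $\|u\|_{\infty,\Omega}\le\mu_{\min}^{-1/2}\|u\|_{W_0^{1,2}(\Omega)}$. Then, using that $\mu(\Omega)=\sum_{x\in\Omega}\mu(x)<\infty$, for any $q\in[1,\infty)$ I bound
\begin{equation*}
\|u\|_{q,\Omega}^q=\sum_{x\in\Omega}\mu(x)|u(x)|^q\le\mu(\Omega)\,\|u\|_{\infty,\Omega}^q,
\end{equation*}
which yields $\|u\|_{q,\Omega}\le C\|u\|_{W_0^{1,2}(\Omega)}$ with $C=\mu(\Omega)^{1/q}\mu_{\min}^{-1/2}$ depending only on $q$ and $\Omega$. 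Together with the $L^\infty$ estimate this covers every $q\in[1,\infty]$.

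For the compactness statement I would argue as follows. Since $W_0^{1,2}(\Omega)$ is a Hilbert space, a bounded sequence $\{u_k\}$ admits a subsequence with $u_k\rightharpoonup u$ in $W_0^{1,2}(\Omega)$. Testing the weak $L^2$-convergence against the indicator $\phi_0$ of a single vertex, just as in Lemma \ref{compact1}, gives $u_k(x)\to u(x)$ for every $x\in\Omega$. Finally, because $\Omega$ is finite, pointwise convergence upgrades directly to $L^q$-convergence for every $q$: for $q\in[1,\infty)$,
\begin{equation*}
\|u_k-u\|_{q,\Omega}^q=\sum_{x\in\Omega}\mu(x)|u_k(x)-u(x)|^q\to0,
\end{equation*}
being a finite sum of terms each tending to zero, and the case $q=\infty$ is immediate since $\Omega$ is finite.

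As for the main difficulty: there is essentially no analytic obstacle here, in contrast to Lemma \ref{compact1}, where the decay hypothesis $(A_2)$ was genuinely needed to control the mass escaping to infinity. The only point requiring care is the initial observation that local finiteness together with the bounded-diameter definition of $\Omega$ forces $\Omega$ (and hence $\partial\Omega$) to be finite; after that the statement is just finite-dimensional linear algebra dressed as functional analysis. One need only keep track of the extra gradient integral over $\partial\Omega$ appearing in the definition of the norm, which contributes nonnegatively and plays no essential role in any of these estimates.
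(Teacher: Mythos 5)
Your proposal is correct and follows essentially the same route as the paper, whose proof simply observes that the argument of Lemma \ref{compact1} carries over with the single simplification that $\Omega$ (hence $\partial\Omega$) is a finite set; you have merely written out in full the details the paper leaves implicit, including the key observation that local finiteness plus bounded diameter forces $\Omega$ to be finite.
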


\begin{proof}
The proof is almost the same as that of Lemma \ref{compact1}. The only difference is that $\Omega$ is a finite set now. By using this fact, it is easy to prove $\|u\|_{q,\Omega}\leq C\|u\|_{W_0^{1,2}(\Omega)}$ for any $q\in [1,\infty]$, where $C$ is a constant depending on $q$ and $\Omega$.
\end{proof}

\section{Existence of a ground state solution}

The existence of a ground state solution for (\ref{equation}) can be proved by standard variational methods. First we give several properties of $\mathcal{N}_\lambda$ and prove that $m_\lambda$ can be achieved in $\mathcal{N}_\lambda$.

\begin{lemma}\label{nonempty}
$\mathcal{N}_\lambda$ is non-empty.
\end{lemma}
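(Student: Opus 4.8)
The plan is to use the standard scaling (fibering) argument: starting from any nonzero test function, I rescale it by a positive factor to land exactly on the Nehari constraint. First I would fix any $u\in C_c(V)$ with $u\not\equiv 0$. Such a $u$ automatically lies in $E_\lambda$, since its support is a finite set and hence all the sums defining $\|u\|_{E_\lambda}$ are finite; moreover $\|u\|_{E_\lambda}>0$ and, by Lemma \ref{compact1}, $u\in L^{p+1}(V)$ with $\|u\|_{p+1,V}>0$ because $u\not\equiv 0$.

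Next I would examine the one-parameter family $tu$ for $t>0$ and ask when $tu\in\mathcal{N}_\lambda$. By homogeneity of the two sides of the Nehari identity \eqref{nehari} we have $\|tu\|_{E_\lambda}^2=t^2\|u\|_{E_\lambda}^2$ and $\int_V|tu|^{p+1}\,d\mu=t^{p+1}\|u\|_{p+1,V}^{p+1}$, so the constraint $tu\in\mathcal{N}_\lambda$ reduces to
\[
t^2\|u\|_{E_\lambda}^2=t^{p+1}\|u\|_{p+1,V}^{p+1}.
\]
Dividing by $t^2>0$ and using $2\le p<\infty$ (so $p-1\ge 1>0$), this is solved by the unique positive value
\[
t_0=\left(\frac{\|u\|_{E_\lambda}^2}{\|u\|_{p+1,V}^{p+1}}\right)^{1/(p-1)},
\]
which is well defined precisely because both the numerator and denominator are strictly positive. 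Then $t_0 u\in E_\lambda\setminus\{0\}$ satisfies the Nehari identity, so $t_0 u\in\mathcal{N}_\lambda$ and $\mathcal{N}_\lambda\neq\emptyset$.

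I do not expect any genuine obstacle in this lemma; it is a routine computation rather than a hard argument. The only points requiring care are the preliminary verifications that the chosen $u$ indeed belongs to $E_\lambda$ and has positive, finite $E_\lambda$- and $L^{p+1}$-norms — all of which are immediate for a compactly supported nonzero function, with the $L^{p+1}$ membership guaranteed by the $\lambda$-independent embedding $E_\lambda\hookrightarrow L^{p+1}(V)$ from Lemma \ref{compact1}. The hypothesis $p\ge 2$ is what keeps the exponent $1/(p-1)$ finite and the scaling map $t\mapsto t_0$ single-valued.
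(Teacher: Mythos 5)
Your proof is correct and follows essentially the same route as the paper: the paper also takes an arbitrary $u\in E_\lambda\setminus\{0\}$, considers $f(t)=J_\lambda'(tu)\cdot tu=t^2\|u\|_{E_\lambda}^2-t^{p+1}\|u\|_{p+1}^{p+1}$, and finds $t_0>0$ with $f(t_0)=0$ so that $t_0u\in\mathcal{N}_\lambda$. The only cosmetic differences are that you restrict to $u\in C_c(V)$ and write the root $t_0=\bigl(\|u\|_{E_\lambda}^2/\|u\|_{p+1}^{p+1}\bigr)^{1/(p-1)}$ explicitly, whereas the paper leaves its existence implicit.
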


\begin{proof}
$\forall u\in E_\lambda\setminus \{0\}$, we define
$$
f(t)=J_\lambda '(tu)\cdot tu=t^2\int_V|\nabla u|^2+(\lambda a+1)u^2d\mu-t^{p+1}\int_V |u|^{p+1}d\mu.
$$
Since $p\geq 2$ and $u\not\equiv 0$, there exists $t_0\in (0,\infty)$ such that $f(t_0)=0$ which implies that $t_0u\in \mathcal{N}_\lambda$.
\end{proof}

\begin{lemma}\label{lowerbound}
$m_\lambda=\inf_{u\in\mathcal{N}_\lambda}J_\lambda(u)>0$
\end{lemma}

\begin{proof}
If $u\in \mathcal{N}_\lambda$, we have
$$
\int_V|\nabla u|^2+(\lambda a+1)u^2d\mu=\int_V |u|^{p+1} d\mu.
$$
By Lemma \ref{compact1}, we have
$$
\|u\|_{E_\lambda}^2=\|u\|_{L^{p+1}}^{p+1}\leq C\|u\|_{E_\lambda}^{p+1},
$$
where $C>0$ is the constant of the embedding $E_{\lambda}\hookrightarrow L^{p+1}(V)$ and is independent of $\lambda$. Since $p\geq 2$, we get
$$
\|u\|_{E_\lambda}\geq \left(\frac{1}{C}\right)^{\frac{1}{p-1}}>0.
$$
This gives that
$$
m_\lambda=\inf_{u\in\mathcal{N}_\lambda}J_\lambda(u)
=\left(\frac{1}{2}-\frac{1}{p+1}\right)
\inf_{u\in\mathcal{N}_\lambda}\|u\|_{E_\lambda}^2
\geq \left(\frac{1}{2}-\frac{1}{p+1}\right)
\left(\frac{1}{C}\right)^{\frac{1}{p-1}}>0.
$$
\end{proof}

\begin{lemma}\label{minimize}
$m_{\lambda}$ can be achieved by some $u_\lambda\in \mathcal{N}_\lambda$. Namely, there exists some $u_\lambda\in \mathcal{N}_\lambda$ such that $J_\lambda(u_\lambda)=m_{\lambda}$.
\end{lemma}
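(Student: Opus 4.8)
The plan is to extract a minimizer from a minimizing sequence via the compactness of Lemma \ref{compact1}, followed by a projection argument back onto $\mathcal{N}_\lambda$. First I would fix a minimizing sequence $\{u_k\}\subset\mathcal{N}_\lambda$ with $J_\lambda(u_k)\to m_\lambda$. Since every $u\in\mathcal{N}_\lambda$ satisfies $\|u\|_{E_\lambda}^2=\|u\|_{p+1}^{p+1}$, on the manifold the functional collapses to $J_\lambda(u)=\big(\tfrac12-\tfrac{1}{p+1}\big)\|u\|_{E_\lambda}^2$, exactly as in the proof of Lemma \ref{lowerbound}; hence $J_\lambda(u_k)\to m_\lambda$ immediately yields a uniform bound on $\|u_k\|_{E_\lambda}$. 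By Lemma \ref{compact1} I then pass to a subsequence for which $u_k\rightharpoonup u_\lambda$ in $E_\lambda$, $u_k(x)\to u_\lambda(x)$ for every $x\in V$, and $u_k\to u_\lambda$ strongly in $L^{p+1}(V)$.

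The next step is to rule out $u_\lambda\equiv 0$. By Lemma \ref{lowerbound} there is a constant $c_0>0$ with $\|u_k\|_{E_\lambda}\geq c_0$, so $\|u_k\|_{p+1}^{p+1}=\|u_k\|_{E_\lambda}^2\geq c_0^2$. The strong $L^{p+1}$ convergence forces $\|u_\lambda\|_{p+1}^{p+1}=\lim_k\|u_k\|_{p+1}^{p+1}\geq c_0^2>0$, and therefore $u_\lambda\neq 0$.

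Then I would combine weak lower semicontinuity of the $E_\lambda$-norm with the strong convergence of the $L^{p+1}$-norm to get
$$
\|u_\lambda\|_{E_\lambda}^2\leq\liminf_k\|u_k\|_{E_\lambda}^2=\liminf_k\|u_k\|_{p+1}^{p+1}=\|u_\lambda\|_{p+1}^{p+1}.
$$
As in Lemma \ref{nonempty}, there is a unique $t_0>0$ with $t_0u_\lambda\in\mathcal{N}_\lambda$, namely $t_0^{p-1}=\|u_\lambda\|_{E_\lambda}^2/\|u_\lambda\|_{p+1}^{p+1}$, and the displayed inequality shows $t_0\leq 1$. Now the chain
$$
m_\lambda\leq J_\lambda(t_0u_\lambda)=\Big(\tfrac12-\tfrac{1}{p+1}\Big)t_0^2\|u_\lambda\|_{E_\lambda}^2\leq\Big(\tfrac12-\tfrac{1}{p+1}\Big)\|u_\lambda\|_{E_\lambda}^2\leq\lim_k J_\lambda(u_k)=m_\lambda
$$
forces all inequalities to be equalities; since $u_\lambda\neq 0$, this is possible only if $t_0=1$, so $\|u_\lambda\|_{E_\lambda}^2=\|u_\lambda\|_{p+1}^{p+1}$, i.e. $u_\lambda\in\mathcal{N}_\lambda$, and $J_\lambda(u_\lambda)=m_\lambda$.

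The crucial ingredient, and where I expect the real content to lie, is the strong $L^{p+1}$ convergence provided by Lemma \ref{compact1}: without it the nonlinear term $\|u_k\|_{p+1}^{p+1}$ could lose mass in the limit, so that one could conclude neither $u_\lambda\neq 0$ nor control of the projection factor. The delicate point is then the bookkeeping that closes the sandwich inequality above; this hinges on the bound $t_0\leq 1$, itself a consequence of lower semicontinuity, which guarantees that projecting $u_\lambda$ onto $\mathcal{N}_\lambda$ cannot raise the energy below $m_\lambda$ and hence pins down $t_0=1$.
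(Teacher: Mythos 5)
Your proposal is correct and follows essentially the same route as the paper: a minimizing sequence bounded via the collapsed form of $J_\lambda$ on $\mathcal{N}_\lambda$, compactness from Lemma \ref{compact1}, weak lower semicontinuity plus strong $L^{p+1}$ convergence to get $\|u_\lambda\|_{E_\lambda}^2\leq\|u_\lambda\|_{p+1}^{p+1}$, and the Nehari projection with $t_0\leq 1$ to conclude $u_\lambda\in\mathcal{N}_\lambda$. The only cosmetic differences are that the paper phrases the last step as a contradiction (a strict inequality would give $J_\lambda(tu_\lambda)\leq t^2m_\lambda<m_\lambda$) rather than your equality-forcing sandwich, and your explicit verification that $u_\lambda\not\equiv 0$ via Lemma \ref{lowerbound} is actually slightly more careful than the paper, which leaves this point implicit.
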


\begin{proof}
Take a sequence $\{u_k\}\subset \mathcal{N}_\lambda$ such that $\lim_{k\rightarrow \infty}J_\lambda(u_k)=m_{\lambda}$. Since
$$
o_k(1)+m_\lambda=J_\lambda(u_k)=\frac{p-1}{2(p+1)}\|u_k\|_{E_\lambda}^2,
$$
where $\lim_{k\rightarrow \infty}o_k(1)=0$, we have that $\{u_k\}$ is bounded in $E_\lambda$. By Lemma \ref{compact1}, we can assume that there exists some $u_\lambda\in E_\lambda$ such that, as $k\rightarrow\infty$,
\begin{align*}
\begin{cases}
u_k\rightharpoonup u_{\lambda}\ &\text{in}\  E_\lambda;\\
u_k\rightarrow u_{\lambda}\ &\text{in}\  L^{p+1}(V);\\
u_k\rightarrow u_{\lambda}\ &\text{for any}\  x\in V.
\end{cases}
\end{align*}
By weak lower semi-continuity of the norm for $E_\lambda$ and convergence of $u_k$ to $u_\lambda$ in $L^{p+1}(V)$, we have
\begin{eqnarray}\label{minimize1}
J_\lambda(u_\lambda)&=&\frac{1}{2}\|u_\lambda\|_{E_\lambda}^2
-\frac{1}{p+1}\|u_\lambda\|_{p+1}^{p+1}\nonumber\\
&\leq& \liminf_{k\rightarrow \infty}
\left(\frac{1}{2}\|u_k\|_{E_\lambda}^2
-\frac{1}{p+1}\|u_k\|_{p+1}^{p+1}\right)\nonumber\\
&=&\liminf_{k\rightarrow \infty}J_\lambda(u_k)\nonumber\\
&=&m_\lambda.
\end{eqnarray}
Now to prove the lemma, we only need to show that $u_\lambda\in\mathcal{N}_\lambda$.

Up to a subsequence, we assume that $\|u_k\|_{E_\lambda}^2\rightarrow C>0$ for some positive constant $C$ as $k\rightarrow \infty$. This together with $\|u_k\|_{E_\lambda}^2=\|u_k\|_{p+1}^{p+1}$ gives that
$$
\|u_\lambda\|_{p+1}^{p+1}=\lim_{k\rightarrow \infty}\|u_k\|_{p+1}^{p+1}=C.
$$
Noticing that $u_k\in \mathcal{N}_\lambda$, we have
$$
\|u_\lambda\|_{E_\lambda}^2\leq \liminf_{k\rightarrow\infty}\|u_k\|_{E_\lambda}^2
=\liminf_{k\rightarrow\infty}\|u_k\|_{p+1}^{p+1}
=\|u_\lambda\|_{p+1}^{p+1}.
$$
If $\|u_\lambda\|^2<\|u_\lambda\|_{p+1}^{p+1}$, by similar arguments as in Lemma \ref{nonempty}, we get that there exists some $t\in (0,1)$ such that $tu_\lambda\in \mathcal{N}_\lambda$. This gives that
\begin{eqnarray*}
0<m_\lambda\leq J_\lambda(tu_\lambda)
&=&\left(\frac{1}{2}-\frac{1}{p+1}\right)\|tu_\lambda\|_{E_\lambda}^2\\
&=&t^2\left(\frac{1}{2}-\frac{1}{p+1}\right)\|u_\lambda\|_{E_\lambda}^2\\
&\leq&t^2\liminf_{k\rightarrow \infty}
\left(\frac{1}{2}-\frac{1}{p+1}\right)\|u_k\|_{E_\lambda}^2\\
&=&t^2\liminf_{k\rightarrow \infty}J_\lambda(u_k)\\
&=&t^2 m_\lambda<m_\lambda,
\end{eqnarray*}
which is a contradiction with the fact that $m_\lambda=\inf_{u\in\mathcal{N}_\lambda}J_\lambda(u)$. Therefore, $\|u_\lambda\|^2=\|u_\lambda\|_{p+1}^{p+1}$ and (\ref{minimize1}) gives that $m_\lambda$ is achieved by $u_\lambda\in \mathcal{N}_\lambda$.
\end{proof}

\begin{lemma}\label{criticalpoint}
$u_\lambda\in \mathcal{N}_\lambda$ is a ground state solution of (\ref{equation}).
\end{lemma}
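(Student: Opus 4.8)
The plan is to show that the constrained minimizer $u_\lambda$ produced in Lemma \ref{minimize} is in fact a \emph{free} critical point of $J_\lambda$, via a Lagrange multiplier argument adapted to the Nehari constraint. First I would record that, thanks to the $\lambda$-uniform embedding $E_\lambda\hookrightarrow L^{p+1}(V)$ of Lemma \ref{compact1}, the functional $J_\lambda$ is of class $C^1$ on $E_\lambda$ with
$$
J_\lambda'(u)\cdot v=\int_V\big(\Gamma(u,v)+(\lambda a+1)uv\big)\,d\mu-\int_V |u|^{p-1}u\,v\,d\mu.
$$
Setting $G(u):=J_\lambda'(u)\cdot u=\|u\|_{E_\lambda}^2-\|u\|_{p+1}^{p+1}$, the Nehari manifold of (\ref{nehari}) is the punctured level set $\mathcal{N}_\lambda=\{u\neq 0:G(u)=0\}$, and $G$ is likewise $C^1$ on $E_\lambda$.

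The key step is to verify that $u_\lambda$ is a regular point of the constraint $G$. Differentiating gives $G'(u)\cdot v=2\langle u,v\rangle_{E_\lambda}-(p+1)\int_V |u|^{p-1}u\,v\,d\mu$, so, using that $u_\lambda\in\mathcal{N}_\lambda$ (i.e. $\|u_\lambda\|_{E_\lambda}^2=\|u_\lambda\|_{p+1}^{p+1}$),
$$
G'(u_\lambda)\cdot u_\lambda=2\|u_\lambda\|_{E_\lambda}^2-(p+1)\|u_\lambda\|_{p+1}^{p+1}=(1-p)\|u_\lambda\|_{E_\lambda}^2.
$$
Since $p\geq 2$ and $\|u_\lambda\|_{E_\lambda}>0$ by Lemma \ref{lowerbound}, this quantity is strictly negative; in particular $G'(u_\lambda)\neq 0$. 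Hence the Lagrange multiplier rule applies and furnishes $\theta\in\mathbb{R}$ with $J_\lambda'(u_\lambda)=\theta\,G'(u_\lambda)$ in $E_\lambda^\ast$.

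To finish I would evaluate this identity at $u_\lambda$ itself. On one hand $J_\lambda'(u_\lambda)\cdot u_\lambda=G(u_\lambda)=0$ because $u_\lambda\in\mathcal{N}_\lambda$; on the other hand $\theta\,G'(u_\lambda)\cdot u_\lambda=\theta(1-p)\|u_\lambda\|_{E_\lambda}^2$. As the latter factor is nonzero, we must have $\theta=0$, whence $J_\lambda'(u_\lambda)=0$; that is, $u_\lambda$ is a weak solution of (\ref{equation}) in the sense of Definition \ref{weaksolution} (and, by Remark \ref{pointwise}, a pointwise solution). Finally, every nonzero critical point $u$ of $J_\lambda$ satisfies $J_\lambda'(u)\cdot u=0$, so it lies on $\mathcal{N}_\lambda$; therefore $m_\lambda=\min_{\mathcal{N}_\lambda}J_\lambda$ (achieved by Lemma \ref{minimize}) is the least critical value among all nonzero critical points, and $u_\lambda$, attaining it, is a ground state solution.

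The only delicate point is the justification of the multiplier step, which reduces precisely to the regularity computation $G'(u_\lambda)\cdot u_\lambda\neq 0$; this is exactly where the hypothesis $p\geq 2$ and the lower bound $\|u_\lambda\|_{E_\lambda}>0$ from Lemma \ref{lowerbound} are used, the rest being bookkeeping. An alternative that avoids the abstract multiplier theorem is the fibering approach: for fixed $\varphi$ and small $s$ one uses, as in Lemma \ref{nonempty}, that each $u_\lambda+s\varphi$ admits a unique Nehari scaling $t(s)>0$ with $t(0)=1$, differentiates $s\mapsto J_\lambda\big(t(s)(u_\lambda+s\varphi)\big)$ at its minimum $s=0$, and again invokes $G'(u_\lambda)\cdot u_\lambda\neq 0$ to see that the $t'(0)$ contribution cancels, leaving $J_\lambda'(u_\lambda)\cdot\varphi=0$.
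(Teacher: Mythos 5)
Your proof is correct, but its primary route is genuinely different from the paper's. The paper proves this lemma by the fibering (perturb-and-reproject) argument that you only sketch as an alternative in your last sentences: for a test function $\phi$ it scales $u_\lambda+s\phi$ back onto $\mathcal{N}_\lambda$ by an explicit factor $t(s)$, observes that $\gamma(s)=J_\lambda\bigl(t(s)(u_\lambda+s\phi)\bigr)$ attains its minimum at $s=0$, and computes $0=\gamma'(0)=J'_\lambda(u_\lambda)\cdot\bigl[t'(0)u_\lambda+\phi\bigr]=J'_\lambda(u_\lambda)\cdot\phi$; the key cancellation there is not your nondegeneracy condition but simply the Nehari identity $J'_\lambda(u_\lambda)\cdot u_\lambda=0$, which annihilates the $t'(0)$ term. (What the paper uses implicitly, and does not verify, is that $s\mapsto t(s)$ is differentiable at $s=0$; this follows from its explicit formula because $\|u_\lambda+s\phi\|_{p+1}$ stays bounded away from zero for small $s$.) Your argument instead invokes the abstract Lagrange multiplier rule: you check that the constraint $G(u)=J'_\lambda(u)\cdot u$ is regular at $u_\lambda$ via $G'(u_\lambda)\cdot u_\lambda=(1-p)\|u_\lambda\|_{E_\lambda}^2<0$, obtain $J'_\lambda(u_\lambda)=\theta\,G'(u_\lambda)$, and test against $u_\lambda$ to force $\theta=0$. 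Both are standard faces of the Nehari method and rest on the same structural facts ($p\geq 2$ and the lower bound on $\|u_\lambda\|_{E_\lambda}$). Yours buys a cleaner conceptual picture --- $\mathcal{N}_\lambda$ is a $C^1$ regular level set away from the origin, and the constraint is ``natural'' in the sense that the multiplier necessarily vanishes --- at the price of citing the multiplier theorem in Hilbert space; the paper's is more elementary and self-contained, requiring only the explicit scaling and a one-line derivative computation, but it quietly assumes the differentiability of $t(s)$, which your multiplier route avoids having to address.
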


\begin{proof}
We shall prove that for any $\phi\in C_c(V)$, we have
\begin{equation}\label{variation}
J'_\lambda(u_\lambda)\cdot \phi=0.
\end{equation}
We can choose a constant $\epsilon>0$, such that $u_\lambda+s\phi\not\equiv 0$ when $s\in (-\epsilon,\epsilon)$. For any $s\in (-\epsilon,\epsilon)$, there exists some $t(s)\in (0,\infty)$ such that $t(s)(u_\lambda+s\phi)\in \mathcal{N}_\lambda$. In fact, we can take
$$
t(s)=\left(\frac{\|u_\lambda+s\phi\|_{E_\lambda}^2}
{\|u_\lambda+s\phi\|_{p+1}^{p+1}}\right)^{\frac{1}{p+1}}.
$$
In particular, we have $t(0)=1$. Define a function $\gamma(s):(-\epsilon,\epsilon)\rightarrow \mathbb{R}$ as
$$
\gamma(s):=J_\lambda(t(s)(u_\lambda+s\phi)).
$$
Since $t(s)(u_\lambda+s\phi)\in \mathcal{N}_\lambda$ and $J_\lambda(u_\lambda)=\inf_{u\in \mathcal{N}_\lambda}J(u)$, we get that $\gamma(s)$ achieves its minimum at $s=0$. This implies that
\begin{eqnarray*}
0=\gamma'(0)&=&J'_\lambda(t(0)u)\cdot[t'(0)u_\lambda+t(0)\phi]\\
&=&J'_\lambda(u_\lambda)\cdot t'(0)u_\lambda+J'_\lambda(u_\lambda)\cdot\phi\\
&=&J'_\lambda(u_\lambda)\cdot\phi.
\end{eqnarray*}
\end{proof}

\begin{remark}
Just as what we have done for (\ref{equation}), we can also get a ground state solution $u_0$ for (\ref{dirichlet}) which achieves the minimum $m_\Omega$ of the functional $J_\Omega$ in ${\mathcal{N}}_\Omega$. Thus Theorem \ref{existence} and \ref{oexistence} are proved.
\end{remark}

\begin{remark}\label{positive}
As in \cite{rabinowitz}, we can replace the nonlinearity $|u|^{p-1}u$ on the right hand side of (\ref{equation}) and (\ref{dirichlet}) by $u_{+}^p$ to get positive solutions of the equations, where $u_{+}=\max\{u,0\}$.
\end{remark}

\begin{lemma}\label{bound}
There exists $\sigma>0$ such that, for any critical point $u\in E_\lambda\setminus \{0\}$ of $J_\lambda$, we have $\|u\|_{E_\lambda}\geq \sigma$. Here $\sigma$ is independent of $\lambda$.
\end{lemma}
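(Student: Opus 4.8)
The plan is to show that every nonzero critical point of $J_\lambda$ automatically lies on the Nehari manifold $\mathcal{N}_\lambda$, after which the desired lower bound follows from exactly the same Sobolev-type estimate already used in Lemma \ref{lowerbound}, whose embedding constant is independent of $\lambda$.

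First I would observe that if $u\in E_\lambda\setminus\{0\}$ is a critical point of $J_\lambda$, then $J'_\lambda(u)=0$ as a bounded linear functional on $E_\lambda$, so in particular testing in the direction $u$ itself gives $J'_\lambda(u)\cdot u=0$. By the defining relation (\ref{nehari}) this means precisely that $u\in\mathcal{N}_\lambda$, i.e.
$$
\|u\|_{E_\lambda}^2=\int_V |u|^{p+1}d\mu=\|u\|_{p+1}^{p+1}.
$$

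Next I would invoke Lemma \ref{compact1}, which supplies a constant $C$ depending only on the exponent $q=p+1$, and in particular independent of $\lambda$, such that $\|u\|_{p+1}\leq C\|u\|_{E_\lambda}$. Combining this with the Nehari identity above yields
$$
\|u\|_{E_\lambda}^2=\|u\|_{p+1}^{p+1}\leq C^{p+1}\|u\|_{E_\lambda}^{p+1}.
$$
Since $u\not\equiv 0$ forces $\|u\|_{E_\lambda}>0$, I may divide by $\|u\|_{E_\lambda}^2$ and use $p\geq 2$ to conclude
$$
\|u\|_{E_\lambda}\geq\left(\frac{1}{C^{p+1}}\right)^{\frac{1}{p-1}}=:\sigma>0.
$$
As $C$ and $p$ do not depend on $\lambda$, neither does $\sigma$, which is exactly the assertion.

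The one point that deserves care rather than calculation is justifying that a critical point genuinely satisfies $J'_\lambda(u)\cdot u=0$ with $u$ itself as the test direction, since $u$ need not have compact support; this is fine because ``critical point of $J_\lambda$'' means $J'_\lambda(u)$ vanishes on all of $E_\lambda$ and $u\in E_\lambda$, while the quantity $J'_\lambda(u)\cdot u$ is finite thanks to the embeddings of Lemma \ref{compact1}. Beyond this, the argument is essentially a repetition of Lemma \ref{lowerbound}; the only new emphasis is that the bound now holds for every nonzero critical point, not merely for minimizers on $\mathcal{N}_\lambda$, and that it is uniform in $\lambda$.
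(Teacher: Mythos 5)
Your proposal is correct and is essentially identical to the paper's own proof: both use the critical-point identity $J'_\lambda(u)\cdot u=0$ (i.e.\ that $u$ lies on $\mathcal{N}_\lambda$) together with the $\lambda$-independent embedding constant of Lemma \ref{compact1} to obtain $\|u\|_{E_\lambda}^2\leq C^{p+1}\|u\|_{E_\lambda}^{p+1}$, and hence the same lower bound $\sigma=\left(\frac{1}{C}\right)^{\frac{p+1}{p-1}}$.
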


\begin{proof}
Lemma \ref{compact1} tells us that
$$
\|u\|_{{p+1}}\leq C\|u\|_{E_\lambda},
$$
where $C$ is independent of $\lambda$. Since $u$ is a critical point of $J_\lambda$, we have that
\begin{eqnarray*}
0=J'_\lambda(u)\cdot u&=&\int_V (|\nabla u|^2+ (\lambda a+1)u^2)d\mu-\int_V |u|^{p+1} d\mu\\
&\geq& \|u\|_{E_\lambda}^2-C^{p+1}\|u\|_{E_\lambda}^{p+1}.
\end{eqnarray*}
Then we have
$$
\|u\|_{E_\lambda}\geq \left(\frac{1}{C}\right)^{\frac{p+1}{p-1}}
$$
and we can choose $\sigma=\left(\frac{1}{C}\right)^{\frac{p+1}{p-1}}$
\end{proof}

\begin{lemma}\label{bound1}
There exists $C_1>0$ which is independent of $\lambda$ such that if $\{u_k\}$ is a $(PS)_c$ sequence of $J_\lambda$, then
\begin{equation}\label{bound2}
\limsup_{k\rightarrow \infty}\|u_k\|_{E_\lambda}^2\leq \frac{2pc}{p-2}
\end{equation}
and either $c\geq C_1$ or $c=0$.
\end{lemma}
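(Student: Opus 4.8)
The plan is to run the standard Palais--Smale bookkeeping argument, exploiting the mismatch in homogeneity between the quadratic term $\frac12\|u\|_{E_\lambda}^2$ and the superquadratic term $\frac{1}{p+1}\int_V|u|^{p+1}d\mu$ of $J_\lambda$. Recall that a $(PS)_c$ sequence satisfies $J_\lambda(u_k)\to c$ and $J'_\lambda(u_k)\to 0$ in $E_\lambda^*$, so in particular $|J'_\lambda(u_k)\cdot u_k|\le \|J'_\lambda(u_k)\|_{E_\lambda^*}\|u_k\|_{E_\lambda}=o(1)\|u_k\|_{E_\lambda}$. The key algebraic step is to form the combination $J_\lambda(u_k)-\frac1p J'_\lambda(u_k)\cdot u_k$, where the weight $\frac1p$ is chosen precisely to produce the constant $\frac{2pc}{p-2}$. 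A direct computation gives
$$J_\lambda(u_k)-\tfrac1p J'_\lambda(u_k)\cdot u_k=\Big(\tfrac12-\tfrac1p\Big)\|u_k\|_{E_\lambda}^2+\Big(\tfrac1p-\tfrac1{p+1}\Big)\int_V|u_k|^{p+1}d\mu\ge \tfrac{p-2}{2p}\|u_k\|_{E_\lambda}^2,$$
since both fractional coefficients are nonnegative for $p\ge 2$ (the first being positive for $p>2$, the only nontrivial case). Feeding in the $(PS)_c$ data yields $\frac{p-2}{2p}\|u_k\|_{E_\lambda}^2\le c+o(1)+o(1)\|u_k\|_{E_\lambda}$.

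From here I would first extract boundedness of $\{u_k\}$ in $E_\lambda$: the left-hand side is quadratic in $\|u_k\|_{E_\lambda}$ while the perturbation is only linear, so the displayed inequality cannot hold for arbitrarily large $\|u_k\|_{E_\lambda}$. Once boundedness is secured, the term $o(1)\|u_k\|_{E_\lambda}$ becomes genuinely $o(1)$, and passing to the $\limsup$ gives exactly $\limsup_k\|u_k\|_{E_\lambda}^2\le \frac{2pc}{p-2}$, which is \eqref{bound2}.

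For the dichotomy I would combine this boundedness with the $\lambda$-independent embedding of Lemma \ref{compact1}. Boundedness gives $J'_\lambda(u_k)\cdot u_k\to 0$, i.e. $\|u_k\|_{E_\lambda}^2-\int_V|u_k|^{p+1}d\mu\to 0$; substituting this back into $J_\lambda(u_k)\to c$ shows $\|u_k\|_{E_\lambda}^2\to \frac{2(p+1)}{p-1}c=:L^2$. On the other hand $\int_V|u_k|^{p+1}d\mu\le C^{p+1}\|u_k\|_{E_\lambda}^{p+1}$ with $C$ the embedding constant, hence $\|u_k\|_{E_\lambda}^2\le C^{p+1}\|u_k\|_{E_\lambda}^{p+1}+o(1)$. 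Letting $k\to\infty$ gives $L^2\le C^{p+1}L^{p+1}$, so either $L=0$, which forces $c=0$, or $L^{p-1}\ge C^{-(p+1)}$, i.e. $L\ge\sigma:=(1/C)^{(p+1)/(p-1)}$, the same threshold as in Lemma \ref{bound}. In the latter case $c=\frac{p-1}{2(p+1)}L^2\ge \frac{p-1}{2(p+1)}\sigma^2=:C_1>0$; since $C$, and therefore $\sigma$ and $C_1$, are independent of $\lambda$, this is the required uniform lower bound.

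I expect the only genuine subtlety to be the boundedness step, owing to the usual circularity that $J'_\lambda(u_k)\cdot u_k$ already carries a factor of $\|u_k\|_{E_\lambda}$; the resolution is precisely the remark that a positive quadratic lower bound absorbs a linear error, so no separate a priori estimate is needed. The remainder is bookkeeping: the dichotomy is just a reading of whether the limit $L$ of $\|u_k\|_{E_\lambda}$ sits at $0$ or is forced above the Sobolev threshold $\sigma$, in the same spirit as Lemma \ref{bound}. One should note the degenerate endpoint $p=2$, where the bound $\frac{2pc}{p-2}$ is vacuous and the statement holds trivially, the argument being substantive only for $p>2$.
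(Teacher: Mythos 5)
Your proof is correct, and it splits naturally into two halves relative to the paper. The first half, establishing \eqref{bound2}, is essentially the paper's own argument: the same combination $J_\lambda(u_k)-\tfrac1p J'_\lambda(u_k)\cdot u_k \ge \tfrac{p-2}{2p}\|u_k\|_{E_\lambda}^2$, followed by absorption of the linear error $o(1)\|u_k\|_{E_\lambda}$; in fact you are more careful than the paper here, since the paper drops that term inside a $\limsup$ without explicitly noting that a quadratic lower bound absorbs a linear perturbation. The second half, the dichotomy, is where you genuinely diverge. The paper argues by contraposition with a small-norm threshold: it fixes $\epsilon=(1/2C)^{1/(p-1)}$ so that $J'_\lambda(u)\cdot u\ge \tfrac12\|u\|_{E_\lambda}^2$ whenever $\|u\|_{E_\lambda}\le\epsilon$, sets $C_1=\tfrac{(p-2)\epsilon^2}{2p}$, and shows that $c<C_1$ forces (via \eqref{bound2}) the sequence below the threshold, whence $\tfrac12\|u_k\|_{E_\lambda}^2\le o(1)\|u_k\|_{E_\lambda}$, $\|u_k\|_{E_\lambda}\to 0$ and $c=0$. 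You instead compute the exact limit $\|u_k\|_{E_\lambda}^2\to L^2=\tfrac{2(p+1)}{p-1}c$ from the two $(PS)_c$ conditions, and then run the Sobolev-threshold argument of Lemma \ref{bound} on the limit $L$ itself, getting $L=0$ or $L\ge\sigma$. Both routes hinge on the same $\lambda$-independent embedding constant of Lemma \ref{compact1}, and both produce an admissible (though numerically different) $C_1$. Your route buys strictly more information: the identity $\lim_k\|u_k\|_{E_\lambda}^2=\tfrac{2(p+1)}{p-1}c$ is sharper than \eqref{bound2} (since $\tfrac{2(p+1)}{p-1}\le\tfrac{2p}{p-2}$) and is exactly the relation the paper uses elsewhere (Lemma \ref{minimize}, Remark \ref{psbound}); the paper's route is softer but gets the dichotomy without computing any limit.

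One small caution: at $p=2$ your dichotomy argument is not merely ``trivial'' --- it still needs boundedness of $\{u_k\}$ in $E_\lambda$, which your first step (the $\tfrac1p$ combination) no longer provides, since its quadratic coefficient vanishes. The fix is to use the combination $J_\lambda(u_k)-\tfrac{1}{p+1}J'_\lambda(u_k)\cdot u_k=\tfrac{p-1}{2(p+1)}\|u_k\|_{E_\lambda}^2$ for boundedness, after which your dichotomy argument goes through verbatim for all $p\ge 2$. This is a defect shared by the paper (its $C_1$ degenerates to $0$ at $p=2$, and the constant $\tfrac{2pc}{p-2}$ in the statement is itself undefined there), so the lemma should in any case be read with $p>2$.
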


\begin{proof}
Since $J_\lambda(u_k)\rightarrow c$ and $J'_\lambda(u_k)\rightarrow 0$ as $k\rightarrow \infty$, we have
\begin{eqnarray*}
c+o_k(1)\|u_k\|_{E_\lambda}
&=&\limsup_{k\rightarrow \infty}\left(J_{\lambda}(u_k)-\frac{1}{p}J_{\lambda}'(u_k)u_k\right)\\
&=&\limsup_{k\rightarrow \infty}\left[\left(\frac{1}{2}-\frac{1}{p}\right)
\|u_k\|_{E_\lambda}^2
+\left(\frac{1}{p}-\frac{1}{p+1}\right)\|u_k\|_{L^{p+1}}^{p+1}\right]\\
&\geq& \limsup_{k\rightarrow \infty}\left(\frac{1}{2}-\frac{1}{p}\right)
\|u_k\|_{E_\lambda}^2\\
&=& \frac{p-2}{2p}\limsup_{k\rightarrow \infty}\|u_k\|_{E_\lambda}^2
\end{eqnarray*}
which gives (\ref{bound2}).

For any $u\in E_\lambda$,
\begin{eqnarray*}
J'_\lambda(u)\cdot u
&=&\|u\|_{E_\lambda}^2-\|u\|_{L^{p+1}}^{p+1}\\
&\geq& \|u\|_{E_\lambda}^2-C\|u\|_{E_\lambda}^{p+1},
\end{eqnarray*}
where we have used the embedding $E_\lambda\hookrightarrow L^{p+1}(V)$. Take $\epsilon=\left(\frac{1}{2C}\right)^{\frac{1}{p-1}}>0$. If $\|u\|_{E_\lambda}\leq \epsilon$, we have
$$
\frac{1}{2}\|u\|_{E_\lambda}^2\leq J'_\lambda(u)\cdot u.
$$
Take $C_1=\frac{\epsilon^2(p-2)}{2p}$ and suppose $c<C_1$. Since $\{u_k\}$ is a $(PS)_c$ sequence, (\ref{bound2}) gives
$$
\limsup_{k\rightarrow \infty} \|u_k\|_{E_\lambda}^2\leq
\frac{2pc}{p-2}<\epsilon^2.
$$
Hence, for $k$ large, we have
$$
\frac{1}{2}\|u_k\|_{E_\lambda}^2\leq J'_\lambda(u_k)\cdot u_k=o_k(1)\|u_k\|_{E_\lambda}.
$$
Then we have $\|u_k\|_{E_\lambda}\rightarrow 0$ as $k\rightarrow \infty$ which gives $J_\lambda(u_k)\rightarrow c=0$ and the desired results are proved for $C_1=\frac{\epsilon^2(p-2)}{2p}
=\left(\frac{1}{2C}\right)^{\frac{2}{p-1}}\frac{p-2}{2p}$.
\end{proof}

\begin{remark}\label{psbound}
By the proof of the existence of a ground state solution $u_\lambda$, we know that there exists a $(PS)_c$ sequence $u_k$ converges weakly to $u_\lambda$ in $E_\lambda$, where $c=m_\lambda$. By weak lower semi-continuity of the norm $\|\cdot\|_\lambda$, we get that $\|u_\lambda\|_{E_\lambda}$ is bounded by $\frac{2pm_\lambda}{p-2}$.
\end{remark}

\section{Proof of the main results}

Firstly, for the ground states $m_\lambda$ and $m_\Omega$, we have

\begin{lemma}\label{limit}
$m_\lambda\rightarrow m_\Omega$ as $\lambda\rightarrow \infty$.
\end{lemma}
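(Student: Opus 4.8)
The plan is to prove the two one-sided bounds $\limsup_{\lambda\to\infty}m_\lambda\le m_\Omega$ and $\liminf_{\lambda\to\infty}m_\lambda\ge m_\Omega$ separately. The first is elementary and the second carries all the work.

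For the upper bound I would take a ground state $u_0\in\mathcal{N}_\Omega$ of the Dirichlet problem, which exists by Theorem \ref{oexistence}, and extend it by zero to all of $V$, calling the extension $\tilde u_0$. Since $u_0$ vanishes on $\partial\Omega$, every edge that contributes to the gradient sum has both endpoints in the support region, so $\int_V|\nabla\tilde u_0|^2d\mu=\int_{\Omega\cup\partial\Omega}|\nabla u_0|^2d\mu$; and because $a\equiv 0$ on $\Omega$ while $\tilde u_0=0$ off $\Omega$, one also gets $\int_V(\lambda a+1)\tilde u_0^2d\mu=\int_\Omega u_0^2d\mu$ and $\int_V|\tilde u_0|^{p+1}d\mu=\int_\Omega|u_0|^{p+1}d\mu$, all independent of $\lambda$. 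Hence $\|\tilde u_0\|_{E_\lambda}^2=\|u_0\|_{W_0^{1,2}(\Omega)}^2=\|u_0\|_{L^{p+1}(\Omega)}^{p+1}=\|\tilde u_0\|_{p+1}^{p+1}$, so $\tilde u_0\in\mathcal{N}_\lambda$ with $J_\lambda(\tilde u_0)=J_\Omega(u_0)=m_\Omega$. Therefore $m_\lambda\le m_\Omega$ for every $\lambda>1$.

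For the lower bound I would work along a sequence $\lambda_k\to\infty$ with $m_{\lambda_k}\to\liminf_{\lambda\to\infty}m_\lambda$, taking $u_{\lambda_k}$ the ground states of Theorem \ref{existence}. From $m_{\lambda_k}=(\tfrac12-\tfrac1{p+1})\|u_{\lambda_k}\|_{E_{\lambda_k}}^2\le m_\Omega$ the norms $\|u_{\lambda_k}\|_{E_{\lambda_k}}$ are uniformly bounded, and the $\lambda$-independent embedding $E_\lambda\hookrightarrow L^\infty(V)$ of Lemma \ref{compact1} bounds $u_{\lambda_k}$ uniformly in $L^\infty$; a diagonal extraction over the countable set $V$ then gives a subsequence with $u_{\lambda_k}(x)\to u(x)$ for every $x$. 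The inequality $\int_V\lambda_k a\,u_{\lambda_k}^2d\mu\le\|u_{\lambda_k}\|_{E_{\lambda_k}}^2\le C$ forces $\mu(x)\lambda_k a(x)u_{\lambda_k}(x)^2\le C\to 0$ at each $x$ with $a(x)>0$, so $u$ vanishes outside $\Omega$ and $u\in W_0^{1,2}(\Omega)$. Using $(A_2)$ the same bound yields a uniform tail estimate $\int_{d(x,x_0)>R}|u_{\lambda_k}|^2d\mu\le C/A_R$ with $A_R=\inf_{d>R}a\to\infty$; combined with pointwise convergence on the finite ball $\{d\le R\}$ and the uniform $L^\infty$ bound (to control $\int|u_{\lambda_k}|^{p+1}\le\|u_{\lambda_k}\|_\infty^{p-1}\int|u_{\lambda_k}|^2$ on the tail), a standard $\epsilon$-argument upgrades this to $u_{\lambda_k}\to u$ in $L^{p+1}(V)$, so $\|u_{\lambda_k}\|_{E_{\lambda_k}}^2=\|u_{\lambda_k}\|_{p+1}^{p+1}\to\|u\|_{L^{p+1}(\Omega)}^{p+1}$.

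To finish, on the finite set $\Omega\cup\partial\Omega$ pointwise convergence gives $\int_{\Omega\cup\partial\Omega}|\nabla u_{\lambda_k}|^2d\mu+\int_\Omega u_{\lambda_k}^2d\mu\to\|u\|_{W_0^{1,2}(\Omega)}^2$, and dropping the remaining nonnegative terms of $\|u_{\lambda_k}\|_{E_{\lambda_k}}^2$ yields $\liminf_k\|u_{\lambda_k}\|_{E_{\lambda_k}}^2\ge\|u\|_{W_0^{1,2}(\Omega)}^2$. Lemma \ref{bound} furnishes a $\lambda$-independent lower bound $\|u_{\lambda_k}\|_{E_{\lambda_k}}\ge\sigma>0$, so $\|u\|_{p+1}^{p+1}\ge\sigma^2$ and $u\not\equiv0$; the two displayed limits also give $\|u\|_{W_0^{1,2}(\Omega)}^2\le\|u\|_{L^{p+1}(\Omega)}^{p+1}$, so by the scaling argument of Lemma \ref{nonempty} there is $t\in(0,1]$ with $tu\in\mathcal{N}_\Omega$ and $J_\Omega(tu)=t^2(\tfrac12-\tfrac1{p+1})\|u\|_{W_0^{1,2}(\Omega)}^2\le(\tfrac12-\tfrac1{p+1})\|u\|_{W_0^{1,2}(\Omega)}^2$. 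Then
$$\liminf_{\lambda\to\infty}m_\lambda=\lim_k m_{\lambda_k}=\Big(\tfrac12-\tfrac1{p+1}\Big)\lim_k\|u_{\lambda_k}\|_{E_{\lambda_k}}^2\ge\Big(\tfrac12-\tfrac1{p+1}\Big)\|u\|_{W_0^{1,2}(\Omega)}^2\ge J_\Omega(tu)\ge m_\Omega,$$
which together with $m_\lambda\le m_\Omega$ gives $m_\lambda\to m_\Omega$. The main obstacle is the third paragraph: because $\lambda$ varies, Lemma \ref{compact1} cannot be applied verbatim, so I must reprove the needed compactness by hand, and the crux is exploiting $(A_2)$ together with the uniform energy bound to obtain the uniform tail control that simultaneously localizes the limit $u$ to $\Omega$ and delivers strong $L^{p+1}(V)$ convergence.
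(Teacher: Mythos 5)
Your proof is correct, and its skeleton is the same as the paper's: an upper bound $m_\lambda\le m_\Omega$, plus a lower bound obtained by following the ground states $u_{\lambda_k}$ along a sequence $\lambda_k\to\infty$, localizing their limit to $\Omega$, scaling the limit into $\mathcal{N}_\Omega$ by a factor in $(0,1]$, and comparing energies. The differences are in the technical execution, and in both places where you deviate, your version is more explicit than the paper's. For the upper bound, the paper asserts the strict inequality $m_\lambda<m_\Omega$ with a one-line appeal to a maximum principle, whereas you verify directly that the zero-extension $\tilde u_0$ of a Dirichlet ground state lies in $\mathcal{N}_\lambda$ with $J_\lambda(\tilde u_0)=m_\Omega$ (including the edge-counting identity for the gradient term), which yields the non-strict bound that is all the argument needs. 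For the lower bound, the paper extracts $u_{\lambda_k}\rightharpoonup u_0$ in $W^{1,2}(V)$ and $u_{\lambda_k}\to u_0$ in $L^q(V)$ and then uses weak lower semicontinuity of the norm; this implicitly applies the compact embedding of Lemma \ref{compact1} to a sequence whose members live in different spaces $E_{\lambda_k}$ (justifiable because $\lambda_k>1$ makes the sequence bounded in $E_1$, but the paper never says so). You flag exactly this issue and reprove the compactness by hand: the $\lambda$-independent $L^\infty$ bound, a diagonal pointwise extraction, and the $(A_2)$ tail estimate give strong $L^{p+1}(V)$ convergence, while weak lower semicontinuity is replaced by pointwise convergence of the quadratic form on the finite set $\Omega\cup\partial\Omega$ together with discarding nonnegative terms, so weak convergence is never used. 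Your remaining deviations are cosmetic: you localize the limit directly from the bound $\mu(x)\lambda_k a(x)u_{\lambda_k}(x)^2\le C$ and obtain nontriviality of the limit from Lemma \ref{bound}, where the paper argues by contradiction (blow-up of $J_{\lambda_k}$ at a vertex where $u_0\neq 0$ outside $\Omega$) and invokes Lemma \ref{bound1} to get $M>0$. The net effect is the same strategy carried out in a more elementary, self-contained way, which also patches the two spots where the paper's own write-up is terse.
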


\begin{proof}
It is obvious that $m_\lambda<m_\Omega$ for any $\lambda>0$. Because otherwise we can find a nontrivial solution $u_0$ of (\ref{equation}) which vanishes outside $\Omega$. This is impossible due to the maximum principle. Take a sequence $\lambda_k\rightarrow \infty$ such that
$$
\lim_{k\rightarrow \infty} m_{\lambda_k}=M\leq m_{\Omega}
$$
where $m_{\lambda_k}$ is the ground state of the the ground state solution $u_{\lambda_k}\in \mathcal{N}_{\lambda_k}$ of (\ref{equation}). Then Lemma \ref{bound1} tells us that $M>0$. Since $\{u_{\lambda_k}\}$ is uniformly bounded in $W^{1,2}(V)$, up to a subsequence, we assume that there exists some $u_0\in W^{1,2}(V)$ such that
$$
u_k\rightharpoonup u_0 \ \text{in}\  W^{1,2}(V)
$$
and for any $q\in [2,\infty)$,
$$
u_k\rightarrow u_0 \ \text{in}\  L^q(V).
$$

We claim that $u_0|_{\Omega^c}=0$. If not, there exists a vertex $x_0\not\in \Omega$ such that $u_0(x_0)\neq 0$. Since $u_{\lambda_k}\in \mathcal{N}_{\lambda_k}$, we have
$$
J_{\lambda_k}(u_{\lambda_k})=\frac{p-1}{2(p+1)}\|u_{\lambda_k}\|_{E_{\lambda_k}}^2\geq \frac{p-1}{2(p+1)}\lambda_k \int_V a(x)u_{\lambda_k}^2d\mu
\geq \frac{p-1}{2(p+1)}\lambda_k a(x_0)\mu(x_0)u_{\lambda_k}(x_0)^2
$$
Since $a(x_0)=a_0>0$, $\mu(x_0)\geq\mu_{\min}>0$, $u_{\lambda_k}(x_0)\rightarrow u_0(x_0)=u_0\neq 0$ and $\lambda_k\rightarrow \infty$, we get
$$
\lim_{k\rightarrow \infty}J_{\lambda_k}(u_{\lambda_k})=\infty
$$
which is a contradiction to the fact that $m_{\lambda_k}< m_{\Omega}$.

Since for any $2\leq q< \infty$,
$$
u_{\lambda_k}\rightarrow u_0 \ \text{in}\ L^q(V)
$$
and
$$
u_{\lambda_k}\rightharpoonup u_0 \ \text{in}\ W^{1,2}(V),
$$
we get
\begin{eqnarray*}
\int_{\Omega\cup \partial\Omega}(|\nabla u_0|^2+u_0^2) d\mu&\leq&\int_V(|\nabla u_0|^2+u_0^2) d\mu\\
&\leq& \liminf_{k\rightarrow \infty}\int_V(|\nabla u_{\lambda_k}|^2+u_{\lambda_k}^2) d\mu\\
&\leq& \liminf_{k\rightarrow \infty}\int_V(|\nabla u_{\lambda_k}|^2+(\lambda_k a+1)u_{\lambda_k}^2) d\mu\\
&=& \liminf_{k\rightarrow \infty}\int_V |u|_{\lambda_k}^{p+1} d\mu\\
&=&\int_V |u_0|^{p+1} d\mu
\end{eqnarray*}
Noticing that $u_0|_{\Omega^c}=0$, we get
$$
\int_{\Omega\cup \partial\Omega}(|\nabla u_0|^2+u_0^2) d\mu\leq\int_\Omega |u_0|^{p+1} d\mu.
$$
Then there exists $\alpha\in (0,1]$ such that $\alpha u_0\in \mathcal {N}_\Omega$, i.e.
$$
\int_{\Omega\cup \partial\Omega} |\alpha\nabla u_0|^2+|\alpha u_0|^2 d\mu=\int_\Omega |\alpha u_0|^{p+1}d\mu.
$$
This implies that
\begin{eqnarray*}
J_\Omega(\alpha u_0)&=&\frac{p-1}{2(p+1)}\int_{\Omega\cup \partial\Omega} (|\alpha \nabla u_0|^2+|\alpha u_0|^2)d\mu\\
&\leq&\frac{p-1}{2(p+1)}\int_V (|\alpha \nabla u_0|^2+|\alpha u_0|^2)d\mu\\
&\leq&\frac{p-1}{2(p+1)}\int_V (|\nabla u_0|^2+|u_0|^2)d\mu\\
&\leq&\liminf_{k\rightarrow \infty}\frac{p-1}{2(p+1)}\int_V (|\nabla u_{\lambda_k}|^2+(\lambda_k a(x)+1)|u_{\lambda_k}|^2)d\mu\\
&=&M.
\end{eqnarray*}
Consequently, $M\geq m_\Omega$. Then we get that
$$\lim_{\lambda\rightarrow \infty}m_{\lambda}=m_\Omega.$$
\end{proof}

Next we prove Theorem \ref{convergence}.

\begin{proof}
We need to prove that for any sequence $\lambda_k\rightarrow \infty$, the corresponding $u_{\lambda_k}\in \mathcal{N}_{\lambda_k}$ satisfying $J_{\lambda_k}(u_{\lambda_k})=m_{\lambda_k}$ converges in $W^{1,2}(V)$ to a ground state solution $u_\Omega$ of (\ref{dirichlet}) along a subsequence.

Lemma \ref{bound1} gives that $u_{\lambda_k}$ is bounded in $E_{\lambda_k}$ and the upper-bound is independent of $\lambda_k$. Consequently, we have that $\{u_{\lambda_k}\}$ is also bounded in $W^{1,2}(V)$. Therefore, we can assume that for any $2\leq q< \infty$,
$$
u_{\lambda_k}\rightarrow u_0 \ \text{in}\ L^q(V)
$$
and
$$
u_{\lambda_k}\rightharpoonup u_0 \ \text{in}\ W^{1,2}(V).
$$
Moreover, we get from Lemma \ref{bound} that $u_0\not\equiv 0$. As what we have done in Lemma \ref{limit}, we can prove that $u_0|_{\Omega^c}=0$. Then it is sufficient to show that as $k\rightarrow \infty$, we have
$$
\lambda_k\int_V a(x)u_{\lambda_k}^2d\mu\rightarrow 0
$$
and
$$
\int_V |\nabla u_{\lambda_k}|^2d\mu\rightarrow \int_V |\nabla u_0|^2d\mu.
$$
If
$$
\lim_{k\rightarrow\infty}\lambda_k\int_V a(x)u_{\lambda_k}^2d\mu=\delta>0
$$
we have
\begin{eqnarray*}
\int_{\Omega\cup\partial\Omega} |\nabla u_0|^2+u_0^2d\mu&<&\int_V |\nabla u_0|^2+u_0^2d\mu+\delta\\
&\leq&\liminf_{k\rightarrow \infty}\int_V |\nabla u_{\lambda_k}|^2+(\lambda_k a(x)+1)u_{\lambda_k}^2d\mu\\
&=&\liminf_{k\rightarrow \infty}\int_V |u_{\lambda_k}|^{p+1}d\mu\\
&=&\int_\Omega |u_0|^{p+1}d\mu
\end{eqnarray*}
Then there exists $\alpha\in (0,1)$ such that $\alpha u_0\in \mathcal{N}_\Omega$.
Similarly, if
$$
\liminf_{k\rightarrow \infty}\int_V |\nabla u_{\lambda_k}|^2d\mu>\int_V |\nabla u_0|^2d\mu,
$$
we also have $\int_{\Omega\cup\partial \Omega} |\nabla u_0|^2+u_0^2d\mu<\int_\Omega |u_0|^{p+1}d\mu$. Then in both cases, we can find $\alpha\in (0,1)$ such that $\alpha u_0\in \mathcal{N}_\Omega$. Consequently, we have
\begin{eqnarray*}
J_\Omega(\alpha u_0)&=&\frac{p-1}{2(p+1)}\int_\Omega (|\alpha \nabla u_0|^2+|\alpha u_0|^2)d\mu\\
&=&\frac{p-1}{2(p+1)}\alpha^2\int_\Omega (|\nabla u_0|^2+|u_0|^2)d\mu\\
&<&\frac{p-1}{2(p+1)}\int_V (|\nabla u_0|^2+| u_0|^2)d\mu\\
&\leq&\lim_{k\rightarrow \infty}\frac{p-1}{2(p+1)}\int_V (|\nabla u_{\lambda_k}|^2+(\lambda_k a(x)+1) u_{\lambda_k}^2)d\mu\\
&=&\lim_{k\rightarrow \infty}J_{\lambda_k}(u_{\lambda_k})\\
&=&m_\Omega
\end{eqnarray*}
which is a contradiction. Then we have that $u_0$ is a solution of (\ref{dirichlet}) and Lemma \ref{limit} gives that in fact $u_0$ is a ground state solution of (\ref{dirichlet}).
\end{proof}

\section{Numerical experiments}

To illustrate our results, we consider a finite connected graph $G_9=(V,E)$ with $9$ vertexes which is shown in Figure \ref{graph}.
\begin{figure}[h]
\begin{centering}
\includegraphics[width=180pt]{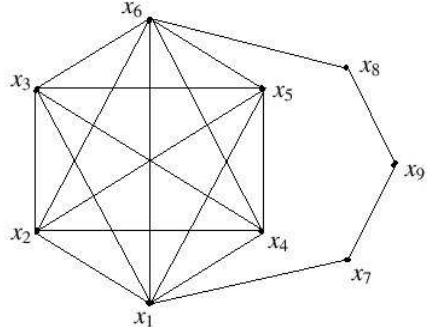}\\
\end{centering}
\caption[dataset]{The graph $G_9$}\label{graph}
\end{figure}
For $G_9$, the vertex set $V$ is $\{x_1,x_2,\cdots, x_9\}$. If two vertexes $x_i$ and $x_j$ are connected by an edge, we denote the edge by $x_{ij}$. The edge set $E$ of $G_9$ is composed of $x_{12},x_{13}, x_{14}, x_{15},x_{16},x_{17},x_{23},x_{24},x_{25},x_{26},x_{34},
x_{35},x_{36},x_{45},x_{46},x_{56},x_{68},x_{79},
x_{89}$. For simplicity, we set the measure $\mu(x_i)=1$ for $i=1,2,\cdots, 9$ and and set $\omega_{ij}=\omega_{ji}=1$ if $x_{ij}\in E$. Obviously, $G_9$ is a finite and connected graph with symmetric weight and uniformly positive measure and is suitable for us to do numerical experiments on it. Now we consider the equation (\ref{equation}) on $G_9$ with $p=2$ and let the potential to be
\begin{align*}
a(x_i)=\begin{cases}
0\ &\text{for}\  1\leq i\leq 6;\\
1\ &\text{for}\  i=7,8,9.
\end{cases}
\end{align*}
Under these assumptions, we have that the potential well $\Omega=\{x_1,x_2,\cdots, x_6\}$ and its boundary $\partial \Omega=\{x_7,x_8\}$.

With the help of the random search method, we start iterations from the initial value in Table \ref{initial value} to get a numerical solution of (\ref{equation}). In this table, we use $u_i$ to denote $u(x_i)$ for $i=1,2,\cdots,9$.
\begin{table}[h]
\caption{Initial value}\label{initial value}
\begin{tabular}{|c|c|c|c|c|c|c|c|c|}
  \hline
  $u_1$ & $u_2$ & $u_3$ & $u_4$ & $u_5$ & $u_6$ & $u_7$ & $u_8$ & $u_9$ \\
  \hline
  8.1472 & 9.0579 & 1.2699 & 9.1338 & 6.3236 & 0.9754 & 2.7850 & 5.4688 & 9.5751 \\
  \hline
\end{tabular}
\end{table}
After computations by MATLAB, we get the corresponding numerical solution $u_{\lambda}$ and find that the values of the solution $u_{\lambda}(x)$ at $x_7$, $x_8$ and $x_9$ decrease as $\lambda$ increasing and almost equal to $0$ after we take $\lambda$ bigger than $10^8$. These coincide with the conclusions in Theorem \ref{convergence} and the details are shown in Figure \ref{numerical solution}.
\begin{figure}[ht]
\centering
{
\subfigure[Trend of $u_7$]{\includegraphics[width=6.5cm,clip]{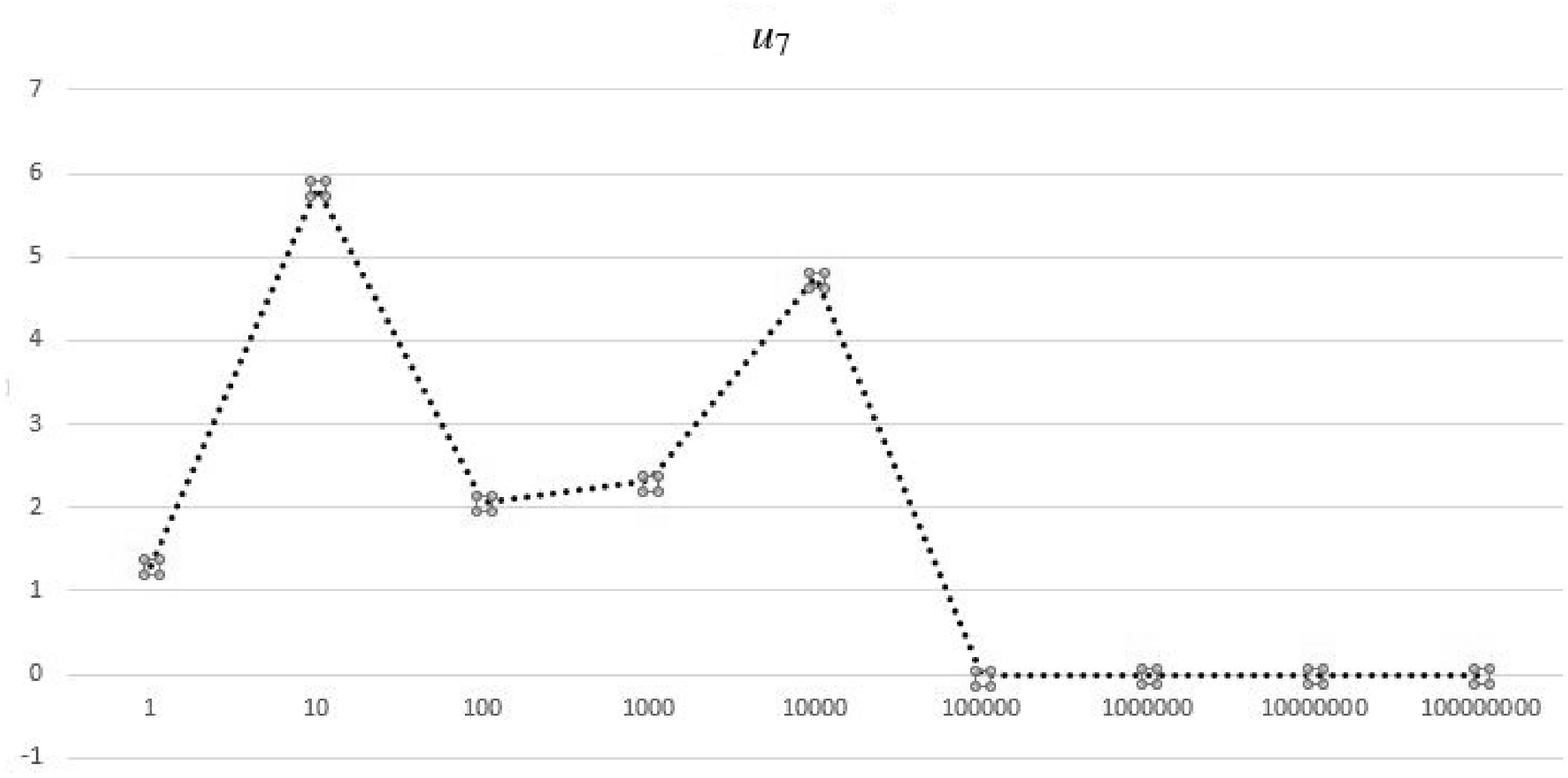}}
\subfigure[Trend of $u_8$]{\includegraphics[width=6.5cm,clip]{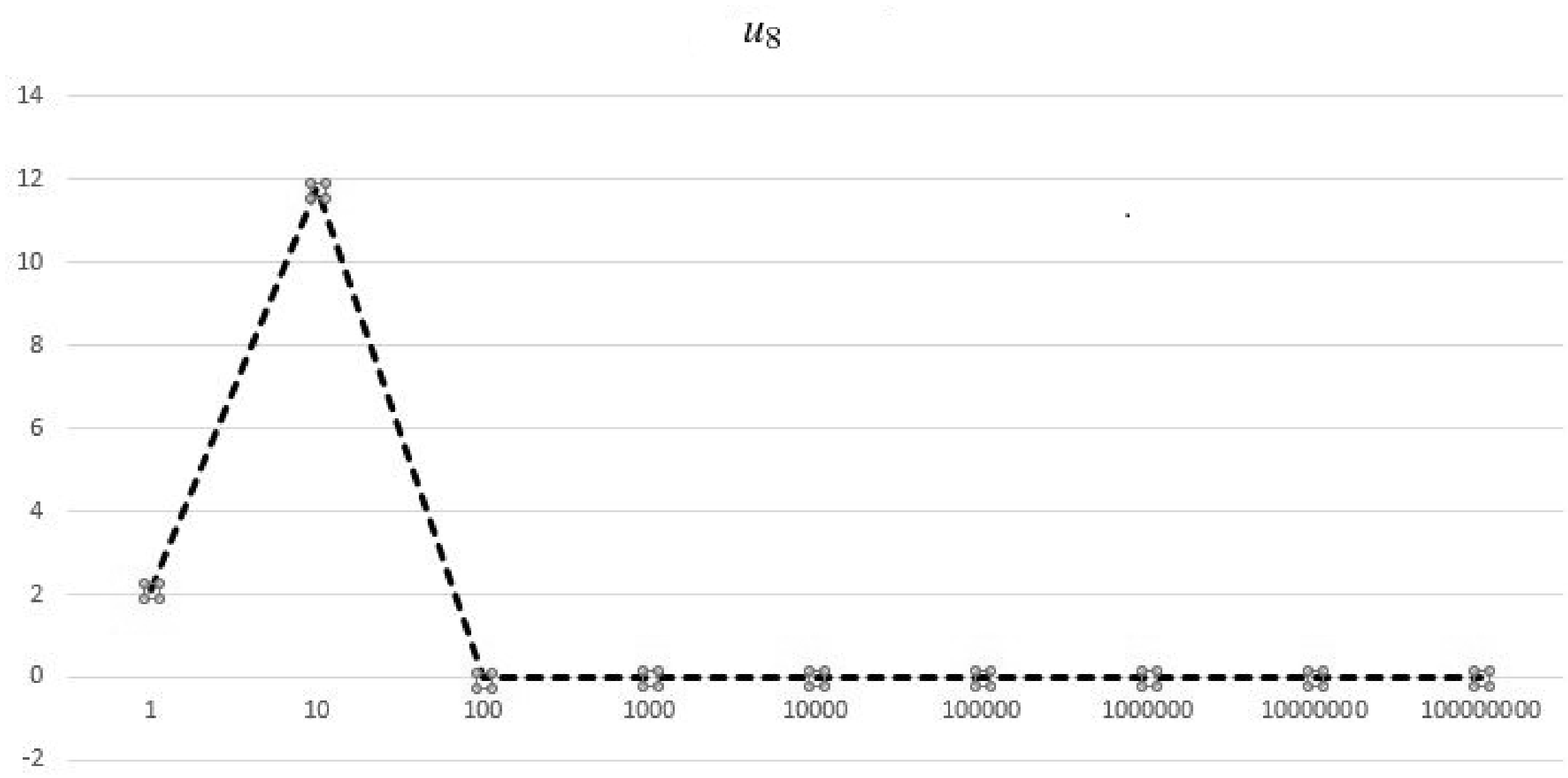}}
\subfigure[Trend of $u_9$]{\includegraphics[width=6.5cm,clip]{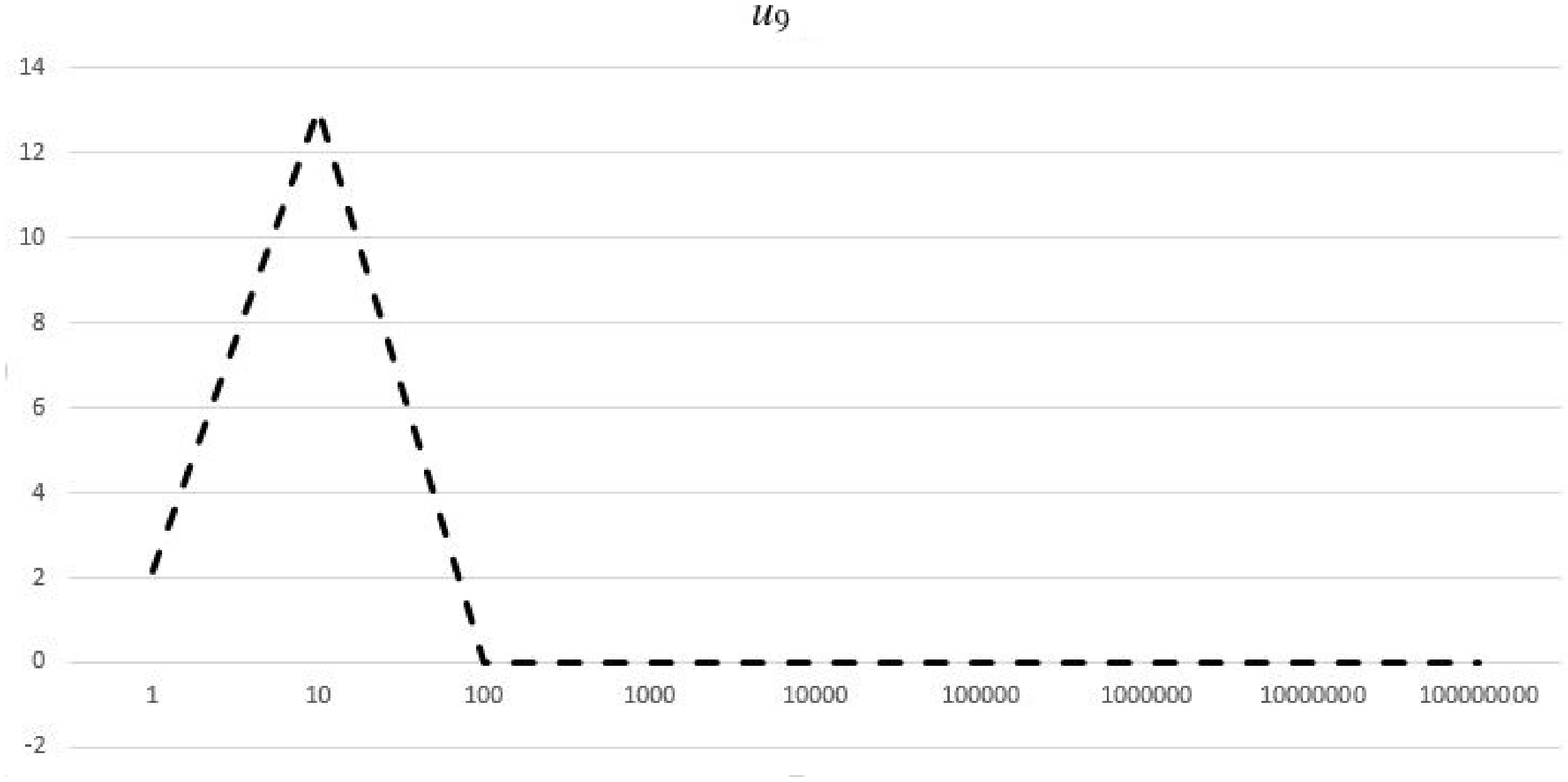}}
\subfigure[Trend of $u(x)$]{\includegraphics[width=6.5cm,clip]{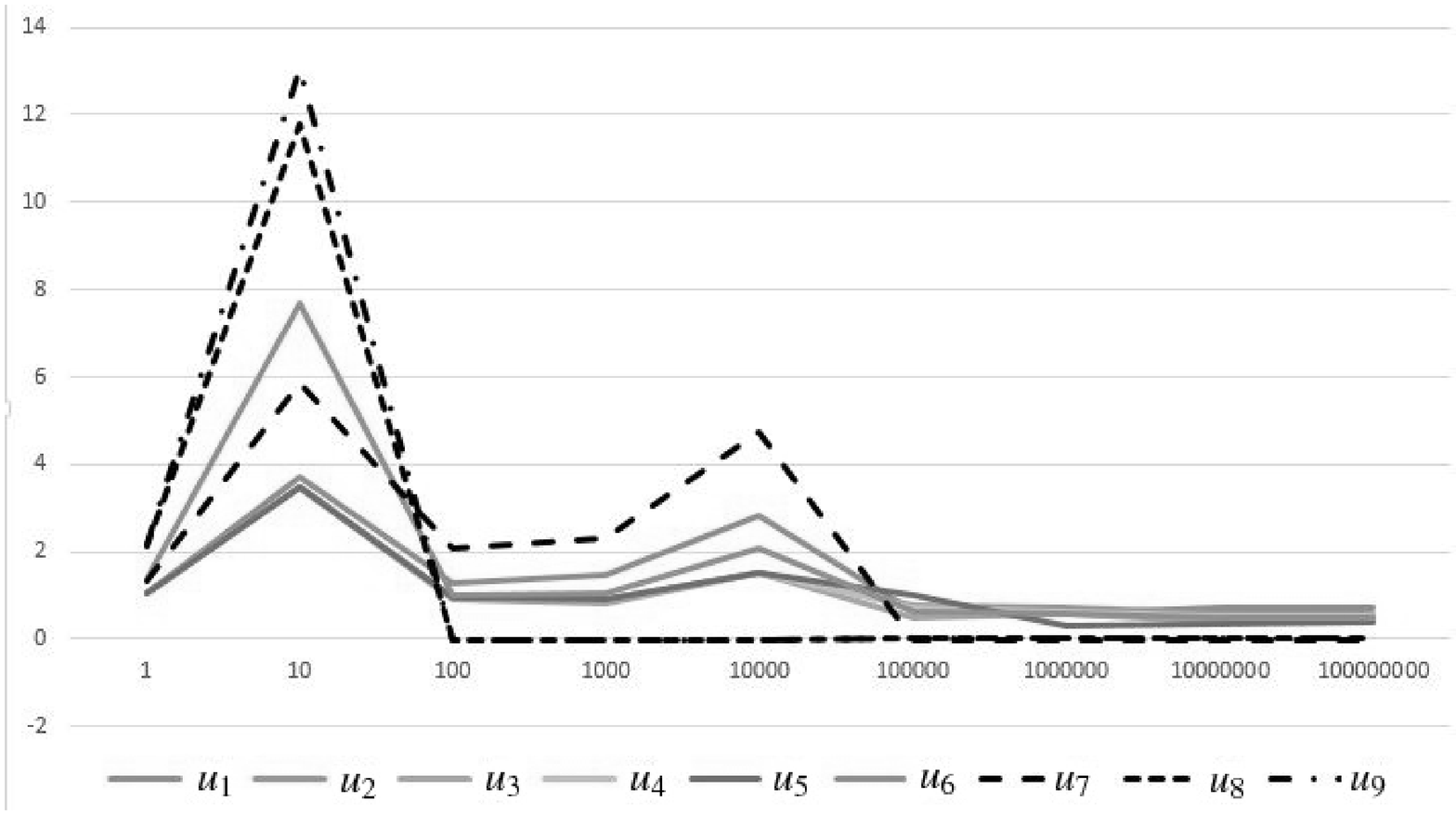}}
}
\caption{Trend of the numerical solutions}\label{numerical solution}
\end{figure}

{\bf Acknowledgements.} The first author is supported by the Funding of Beijing Philosophy and Social Science No. 15JGC153 and the MOE Project of Humanities and Social Sciences No. 16YJCZH148. The second author is supported the Fundamental Research Funds for the Central Universities. Both of the authors are supported by the MOE Project of Key Research Institute of Humanities and Social Sciences at Universities No.16JJD790060 and they would like to thank members of Data Lighthouse for their helpful conversations and valuable suggestions.

\end{document}